\pdfoutput=1
\RequirePackage{ifpdf}
\ifpdf % We are running pdfTeX in pdf mode
\documentclass[pdftex]{sigma}
\else
\documentclass{sigma}
\fi

\usepackage{leftidx}
\usepackage{tikz-cd}
\usepackage{mathtools}

\newtheorem{tw}{Theorem}[section]

\newtheorem{prop}[tw]{Proposition}

{\theoremstyle{definition}
\newtheorem{defi}[tw]{Definition}
\newtheorem{egg}[tw]{Example}
\newtheorem{rem}[tw]{Remark}}

\numberwithin{equation}{section}

\begin{document}
\allowdisplaybreaks

\newcommand{\arXivNumber}{1710.11440}

\renewcommand{\PaperNumber}{098}

\FirstPageHeading

\ShortArticleName{On the Generalization of Hilbert's Fifth Problem to Transitive Groupoids}

\ArticleName{On the Generalization of Hilbert's Fifth Problem\\ to Transitive Groupoids}

\Author{Pawe{\l} RA\'ZNY}

\AuthorNameForHeading{P.~Ra\'zny}

\Address{Institute of Mathematics, Faculty of Mathematics and Computer Science,\\
Jagiellonian University in Cracow, Poland}
\Email{\href{mailto:pawel.razny@student.uj.edu.pl}{pawel.razny@student.uj.edu.pl}}

\ArticleDates{Received December 02, 2017, in f\/inal form December 21, 2017; Published online December 31, 2017}

\Abstract{In the following paper we investigate the question: when is a transitive topo\-lo\-gical groupoid continuously isomorphic to a Lie groupoid? We present many results on the matter which may be considered generalizations of the Hilbert's f\/ifth problem to this context. Most notably we present a~``solution'' to the problem for proper transitive groupoids and transitive groupoids with compact source f\/ibers.}

\Keywords{Lie groupoids; topological groupoids}

\Classification{22A22}

\section{Introduction}
In this short paper we generalize Hilbert's f\/ifth problem (which states that a locally Euclidean topological group is isomorphic as topological groups to a~Lie group and was solved in~\cite{G} and~\cite{MZ}) to the case of transitive groupoids. We prove a couple of versions of Hilbert's f\/ifth problem for transitive groupoids (Theorems~\ref{SE},~\ref{prop} and~\ref{comp}). Many of our main results are written in two versions: one with the weakest assumptions such that the proofs are still valid, and one in which most assumptions are replaced by demanding appropriate spaces to be topological manifolds (which is more in the spirit of Hilbert's f\/ifth problem). We restrict our attention to groupoids with smooth base as otherwise simple counterexamples to the Hilbert's f\/ifth problem for transitive groupoids can be easily found (as described in the f\/inal section of this paper). In light of the examples and results in this paper we feel that topological groupoids with smooth base are natural objects to investigate when it comes to such theorems. Due to the fact that transitive groupoids are an extreme case of groupoids (the opposite extreme case being the totally intransitive groupoids) we feel that the results of this paper combined with a study of totally intransitive groupoids with smooth base can help in solving Hilbert's f\/ifth problem for groupoids.

The paper is split into two parts. The f\/irst part is designed to give all the necessary facts about groupoids and make the paper more self-contained. In the second part we present our main results and give a brief discussion about the assumptions made throughout the paper.

An interested reader can f\/ind a thorough exposition of Hilbert's f\/ifth problem in~\cite{TT}. A~more detailed exposition of groupoid theory and it's vast application (to, e.g., Poisson geometry, symplectic geometry, foliations) can be found in~\cite{McK1,McK2,Moer}. Throughout this paper manifolds (both topological and smooth) are assumed to be Hausdorf\/f and second countable (unless stated otherwise). We do not consider inf\/initely dimensional spaces.

\newpage

\section{Preliminaries}
\subsection{Some topology}
In order to make this short paper as self-contained as possible, we start by recalling some basic topological notions and properties which are used in subsequent sections. We begin with some well known properties of quotient maps (identif\/ications). Recall that a continuous surjective map $p\colon X\rightarrow Y$ is a \emph{quotient map} if a subset $U$ of $Y$ is open in $Y$ if and only if $p^{-1}(U)$ is open in~$X$. Equivalently, $Y$ is the quotient space of $X$ with respect to the relation $\sim$ given by $x\sim y$ if and only if there exists a point $z\in Y$ such that $x,y\in p^{-1}(z)$.
\begin{prop} A surjective continuous map which is either closed or open is a quotient map.
\end{prop}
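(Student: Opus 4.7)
The plan is to verify the quotient-map criterion directly by unpacking the definition and handling the two cases (open, closed) separately, both of which reduce to the same elementary set-theoretic identity about surjective maps.

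Let $p\colon X\to Y$ be a continuous surjection. One implication in the definition of a quotient map is free: if $U\subseteq Y$ is open, then $p^{-1}(U)$ is open in $X$ by continuity of $p$. So the entire content lies in the converse implication: assuming $p^{-1}(U)$ is open in $X$, deduce that $U$ is open in $Y$.

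The key point I would isolate first is the following consequence of surjectivity: for any $U\subseteq Y$, one has $p\bigl(X\setminus p^{-1}(U)\bigr)=Y\setminus U$ and $p\bigl(p^{-1}(U)\bigr)=U$. The second identity is immediate; for the first, $y\in p(X\setminus p^{-1}(U))$ means some preimage of $y$ lies outside $p^{-1}(U)$, i.e.\ $y\notin U$, and conversely every $y\notin U$ has a preimage (by surjectivity) which automatically lies in $X\setminus p^{-1}(U)$.

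With this identity at hand, the two cases are immediate. If $p$ is open and $p^{-1}(U)$ is open, then $U=p(p^{-1}(U))$ is open in $Y$. If $p$ is closed and $p^{-1}(U)$ is open, then $X\setminus p^{-1}(U)$ is closed in $X$, so $p(X\setminus p^{-1}(U))=Y\setminus U$ is closed in $Y$, and hence $U$ is open. There is no real obstacle here: the whole argument is a one-line unpacking of definitions once the surjectivity identity is noted, and no step requires additional hypotheses beyond those given.
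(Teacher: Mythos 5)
Your proof is correct. The paper states this proposition without proof, citing Lee's \emph{Introduction to Topological Manifolds}, and your argument is precisely the standard one given there: the only nontrivial direction is handled by the surjectivity identities $p(p^{-1}(U))=U$ and $p(X\setminus p^{-1}(U))=Y\setminus U$, after which the open and closed cases follow immediately.
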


\begin{defi} Given a quotient map $p\colon X\rightarrow Y$ a subset $U$ of $X$ is called saturated if $x\in p(U)$ implies $p^{-1}(x)\subset U$.
\end{defi}

\begin{prop} Given a quotient map $p\colon X\rightarrow Y$ the restriction $p|_U\colon U\rightarrow p(U)$ to a closed or open saturated subset $U$ of $X$ is also a quotient map.
\end{prop}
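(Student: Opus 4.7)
The plan is to exploit the single key identity that saturation of $U$ forces: $p^{-1}(p(U))=U$, and consequently for any $V\subseteq p(U)$ one has $(p|_U)^{-1}(V)=p^{-1}(V)\subseteq U$. This identity lets us translate between the quotient condition for $p$ and the quotient condition for $p|_U$ without losing any set-theoretic information.

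First I would verify that $p(U)$ is itself open (resp.\ closed) in $Y$, so that the subspace topology on $p(U)$ behaves well. In the open case, $p^{-1}(p(U))=U$ is open in $X$, so by the quotient property of $p$ the set $p(U)$ is open in $Y$. In the closed case, $X\setminus U = X\setminus p^{-1}(p(U)) = p^{-1}(Y\setminus p(U))$ is open in $X$, so again by the quotient property $Y\setminus p(U)$ is open, i.e.\ $p(U)$ is closed in $Y$.

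Next I would check the quotient condition for $p|_U$ directly. Fix $V\subseteq p(U)$. For the open case, suppose $V$ is open in $p(U)$; since $p(U)$ is open in $Y$, $V$ is open in $Y$, hence $p^{-1}(V)$ is open in $X$, and intersecting with $U$ (and using the saturation identity) gives $(p|_U)^{-1}(V)$ open in $U$. Conversely, if $(p|_U)^{-1}(V)$ is open in $U$, then because $U$ is open in $X$ it is also open in $X$; by saturation this set equals $p^{-1}(V)$, so $V$ is open in $Y$ by the quotient property of $p$, and thus open in the subspace $p(U)$. For the closed case, I would run exactly the same argument using the equivalent characterization of quotient maps in terms of closed sets (a set is closed in $Y$ iff its preimage is closed in $X$), with $U$ closed in $X$ and $p(U)$ closed in $Y$ doing the work that openness did above.

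The only subtle point, and hence the step I would be most careful about, is the use of saturation to identify $(p|_U)^{-1}(V)$ with $p^{-1}(V)$; without this the argument collapses, since in general $p^{-1}(V)$ could contain points outside $U$ mapping into $p(U)$. Everything else is bookkeeping with the subspace topology, so once that identity is in place the two directions of the quotient condition fall out symmetrically in both the open and the closed settings.
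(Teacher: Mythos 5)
Your proof is correct and complete: the key identity $p^{-1}(p(U))=U$ forced by saturation, the observation that $p(U)$ is then open (resp.\ closed) in $Y$, and the two-directional check of the quotient condition are exactly what is needed, and you correctly flag the identification $(p|_U)^{-1}(V)=p^{-1}(V)$ for $V\subseteq p(U)$ as the crux. The paper itself offers no proof of this proposition --- it is quoted as a standard fact with a pointer to Lee's textbook --- and your argument is essentially the standard one given there, so there is nothing to compare beyond noting that you have supplied the details the paper omits.
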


\begin{prop}\label{PtQ} Given a quotient map $p\colon X\rightarrow Y$ and another continuous map $f\colon X\rightarrow Z$ which is constant on the fibers of $p$ there is a unique continuous map $\bar{f}\colon Y\rightarrow Z$ such that the following diagram commutes:
\[
\begin{tikzcd}
X \arrow[rd, "p"] \arrow[r, "f"] & Z \\
& Y.\arrow[u, "\bar{f}"]
\end{tikzcd}
\]
\end{prop}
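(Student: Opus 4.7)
The plan is to define $\bar{f}$ pointwise in the only way the commutativity of the diagram allows, verify well-definedness using the hypothesis that $f$ is constant on fibers, obtain uniqueness from surjectivity of $p$, and then deduce continuity from the characterization of $p$ as a quotient map.

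More concretely, for each $y \in Y$ I would pick any $x \in p^{-1}(y)$ (nonempty by surjectivity of the quotient map $p$) and set $\bar{f}(y) := f(x)$. The assumption that $f$ is constant on the fibers of $p$ says exactly that this value is independent of the choice of $x$, so $\bar{f}$ is a well-defined function $Y \to Z$. The relation $\bar{f}\circ p = f$ is immediate from the definition, and any map $g\colon Y \to Z$ satisfying $g \circ p = f$ must agree with $\bar f$ on every $y = p(x)$; since $p$ is surjective this forces $g = \bar{f}$, giving uniqueness.

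For continuity, let $U \subseteq Z$ be open. I would compute
\[
p^{-1}\bigl(\bar f^{-1}(U)\bigr) = (\bar f \circ p)^{-1}(U) = f^{-1}(U),
\]
which is open in $X$ by continuity of $f$. Since $p$ is a quotient map, openness of $p^{-1}\bigl(\bar f^{-1}(U)\bigr)$ in $X$ is equivalent to openness of $\bar f^{-1}(U)$ in $Y$, so $\bar f$ is continuous.

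There is no real obstacle here; the statement is just the universal property of the quotient topology, and the only subtlety is to remember to invoke the quotient-map characterization (not merely continuity of $p$) when passing from openness of $f^{-1}(U)$ in $X$ to openness of $\bar f^{-1}(U)$ in $Y$.
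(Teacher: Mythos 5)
Your proof is correct and complete: the pointwise definition, well-definedness from constancy on fibers, uniqueness from surjectivity, and continuity via the quotient-map characterization are exactly the standard argument. The paper itself gives no proof of this proposition, deferring to Lee's textbook, and your argument is the same one found there, so there is nothing to compare beyond noting that you have supplied the omitted details correctly.
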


All of the above results along with a good revision of quotient maps can be found in \cite{L}. We also wish to recall the following two versions of the closed mapping theorem:
\begin{tw}[{\cite[Lemma 4.50]{L}}] A continuous map $f\colon X\rightarrow Y$, where $X$ is compact and $Y$ is Hausdorff, is a closed map.
\end{tw}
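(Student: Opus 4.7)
The statement is the classical closed mapping theorem, and my plan is to reduce it to three standard facts about compactness and Hausdorff separation. Let $C\subset X$ be closed; the goal is to show $f(C)$ is closed in $Y$.

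The first step is to observe that a closed subset of a compact space is compact, so $C$ is compact: any open cover of $C$ in $X$ can be extended by $X\setminus C$ to an open cover of $X$, and a finite subcover of the latter restricts to a finite subcover of $C$. The second step is that continuous maps send compact sets to compact sets, so $f(C)$ is compact; the argument is to pull back an arbitrary open cover of $f(C)$ through $f$, use compactness of $C$, and push the finite subcover forward by $f$.

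The third step is that a compact subset of a Hausdorff space is closed. For any $y\in Y\setminus f(C)$, Hausdorffness provides, for each $k\in f(C)$, disjoint open neighborhoods $U_k\ni k$ and $V_k\ni y$. Compactness of $f(C)$ selects finitely many $U_{k_1},\dots,U_{k_n}$ covering $f(C)$, and $V_{k_1}\cap\cdots\cap V_{k_n}$ is then an open neighborhood of $y$ disjoint from $f(C)$. Hence $Y\setminus f(C)$ is open, i.e., $f(C)$ is closed.

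There is no real obstacle here, since each of the three ingredients is a textbook lemma; the only thing to do is chain them in the correct order. In the spirit of the paper — which is explicitly designed to be self-contained — I would write the three steps out in at most a few lines each, or alternatively simply cite the corresponding results from~\cite{L} and state the argument as ``$f(C)$ is the continuous image of the compact set $C$, hence compact, hence closed in the Hausdorff space $Y$.''
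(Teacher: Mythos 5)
Your proof is correct and is the standard three-step argument (closed in compact is compact; continuous image of compact is compact; compact in Hausdorff is closed), each step carried out properly. The paper itself offers no proof of this statement — it simply cites \cite[Lemma~4.50]{L} as a recalled textbook fact — and your argument is exactly the one found in that reference, so there is nothing to compare beyond noting that you have supplied the proof the paper deliberately omits.
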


\begin{tw}[{\cite[Theorem 4.95]{L}}] A proper continuous map $f\colon X\rightarrow Y$, where $X$ and $Y$ are locally compact and Hausdorff, is a closed map.
\end{tw}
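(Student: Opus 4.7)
The plan is to show that $f(C)$ is closed in $Y$ for every closed subset $C\subseteq X$, by producing, for each point $y\in Y\setminus f(C)$, an open neighborhood of $y$ that is disjoint from $f(C)$. The strategy is to localize the problem with local compactness of $Y$, thereby reducing the question to an application of the preceding (compact domain) theorem.

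First I would use local compactness together with the Hausdorff property of $Y$ to choose an open neighborhood $V$ of $y$ with compact closure $\overline{V}$. Properness of $f$ then gives that $K:=f^{-1}\bigl(\overline{V}\bigr)$ is a compact subset of $X$. Because $C$ is closed, $K\cap C$ is closed in the compact space $K$ and is therefore itself compact.

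Next I would push the situation forward to $Y$. By continuity, $f(K\cap C)$ is compact, and since $Y$ is Hausdorff this image is closed in $Y$. As $y\notin f(C)\supseteq f(K\cap C)$, the set
\[
W \;:=\; V\setminus f(K\cap C)
\]
is an open neighborhood of $y$. The final check is that $W\cap f(C)=\emptyset$: if some $y'\in V$ were equal to $f(c)$ with $c\in C$, then $y'\in\overline{V}$ would force $c\in f^{-1}(\overline{V})=K$, hence $c\in K\cap C$ and $y'\in f(K\cap C)$, contradicting $y'\in W$. Since $y$ was arbitrary, $Y\setminus f(C)$ is open and $f(C)$ is closed.

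Insofar as there is a difficulty, it is recognizing that properness only needs to be combined with a compact neighborhood of the target point; local compactness of $Y$ is precisely what allows one to trade the global non-compactness of $X$ for a genuinely compact auxiliary set $K$, after which the compact case of the closed mapping theorem essentially does the work. Without local compactness of $Y$ this localization step breaks down, which is the conceptual reason one cannot drop that hypothesis.
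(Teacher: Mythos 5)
Your argument is correct, and it is the standard proof of this fact; note that the paper itself does not prove this statement but simply quotes it from Lee's book, so there is no internal proof to compare against. One small observation: your proof never uses local compactness (or Hausdorffness) of $X$, only of $Y$ --- the hypothesis on $X$ in the quoted statement is simply stronger than needed, and your localization via a precompact neighborhood $V$ of the target point, followed by the compact-domain closed mapping theorem applied to $K=f^{-1}\bigl(\overline{V}\bigr)$, is exactly the right mechanism.
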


We now recall the notion of sequence covering which will be used extensively throughout this paper. More information on this subject can be found in~\cite{Si}.

\begin{defi} A continuous surjective map $f\colon X\rightarrow Y$ is said to have the sequence covering property if for each sequence $\{y_n\}_{n\in\mathbb{N}}$ convergent to $y$ in $Y$ there exists a sequence $\{x_n\}_{n\in\mathbb{N}}$ convergent to $x$ in $X$ satisfying $x_n\in f^{-1}(y_n)$ and $x\in f^{-1}(y)$.
\end{defi}

\begin{prop}\label{SCP}
Any continuous open surjective map has the sequence covering property.
\end{prop}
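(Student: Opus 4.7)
The plan is to reduce the question to a standard diagonal-sequence argument at a single chosen preimage point $x$ of $y$. I will write the proof assuming $X$ is first countable (which is the setting of interest in this paper, since the spaces of interest will be manifolds or open subsets thereof), noting that openness of $f$ is used precisely to turn neighborhoods of $x$ into neighborhoods of $y$.

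First I would choose any $x \in f^{-1}(y)$, which is nonempty by surjectivity, and fix a countable decreasing neighborhood basis $U_1 \supset U_2 \supset \cdots$ at $x$. Since $f$ is open, each $V_k \coloneqq f(U_k)$ is an open neighborhood of $y$. Because $y_n \to y$, for each $k$ there is an index $N_k$ such that $y_n \in V_k$ for all $n \geq N_k$; by passing to a subsequence of the $N_k$'s if necessary, I may assume $N_1 < N_2 < \cdots$.

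Next, I construct the sequence $\{x_n\}$ by cases. For $n < N_1$, pick any $x_n \in f^{-1}(y_n)$. For $N_k \leq n < N_{k+1}$, pick $x_n \in U_k \cap f^{-1}(y_n)$; this set is nonempty precisely because $y_n \in V_k = f(U_k)$. Clearly $x_n \in f^{-1}(y_n)$ for all $n$.

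Finally, I would verify convergence $x_n \to x$: given any open neighborhood $W$ of $x$, there exists $k_0$ with $U_{k_0} \subset W$, and for any $n \geq N_{k_0}$ the construction places $x_n$ inside some $U_j$ with $j \geq k_0$, hence inside $W$. There is no real obstacle here, as the argument is just a careful diagonal choice; the only subtle point to flag is the implicit first-countability assumption on $X$, which is why this lemma will be applied in the paper only in contexts (manifolds, or subspaces of manifolds) where first countability is automatic.
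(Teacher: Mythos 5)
The paper records Proposition~\ref{SCP} as a known fact, with only a pointer to Siwiec's paper, and gives no proof of its own, so there is no internal argument to compare yours against; your diagonal construction is the standard proof and, under the hypothesis you add, it is correct. Taking the $U_k$ to be \emph{open} members of a countable decreasing neighbourhood basis at some $x\in f^{-1}(y)$ (one should say this explicitly, since $f(U_k)$ is only guaranteed open when $U_k$ is), openness of $f$ makes each $V_k=f(U_k)$ an open neighbourhood of $y$, and the choice $x_n\in U_k\cap f^{-1}(y_n)$ for $N_k\le n<N_{k+1}$ converges to $x$ exactly as you argue. The point worth stressing is that your first-countability caveat is not cosmetic: the proposition as stated, with no hypothesis on the domain, is false. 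For instance, let $X=(\mathbb{N}\times\mathbb{N})\cup\{\ast\}$ be the Arens--Fort space (points of $\mathbb{N}\times\mathbb{N}$ isolated, a set containing $\ast$ being a neighbourhood of $\ast$ if and only if it contains all but finitely many points of all but finitely many columns), let $Y=\{0\}\cup\{1/n\colon n\in\mathbb{N}\}\subset\mathbb{R}$, and set $f(n,m)=1/n$, $f(\ast)=0$; this is a continuous open surjection, yet no sequence with $x_n\in f^{-1}(1/n)$ converges to $\ast$, the unique point of $f^{-1}(0)$. So the hypothesis you introduce is genuinely needed, and, as you observe, it is harmless for the paper: the proposition is invoked only in the proof of Proposition~\ref{eq}, for the maps $t_x$ and the restricted multiplication, whose domains $\mathcal{G}_x$ and $\mathcal{G}_x\times\mathcal{G}^x$ are first countable because $\mathcal{G}_1$ is assumed first countable there.
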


Finally, we recall the following well known fact:
\begin{prop}[{\cite[Theorem 1.6.14 and Proposition 1.6.15]{E}}]\label{seq} Let $f\colon X\rightarrow Y$ be a function from a first countable topological space $X$ to a topological space~$Y$. Then~$f$ is continuous if and only if it is sequentially continuous.
\end{prop}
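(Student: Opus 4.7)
The plan is to handle the two implications separately, with first countability used only for the nontrivial direction.

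For the forward direction, I would argue directly from the definition. Assume $f$ is continuous and let $\{x_n\}$ converge to $x$ in $X$. Given any open neighborhood $V$ of $f(x)$ in $Y$, the preimage $f^{-1}(V)$ is an open neighborhood of $x$, so by convergence $x_n \in f^{-1}(V)$ for all sufficiently large $n$, i.e., $f(x_n) \in V$ eventually. Hence $f(x_n) \to f(x)$, proving sequential continuity. No countability assumption is needed here.

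For the converse, I would proceed by contraposition. Suppose $f$ is not continuous. Then there exists a point $x \in X$ and an open neighborhood $V$ of $f(x)$ in $Y$ such that $f^{-1}(V)$ fails to be a neighborhood of $x$. Using first countability of $X$, pick a countable neighborhood basis $\{W_n\}_{n\in\mathbb{N}}$ at $x$; replacing $W_n$ by $W_1\cap\cdots\cap W_n$ if necessary, I may assume the basis is decreasing. Since $W_n \not\subset f^{-1}(V)$ for every $n$, I may choose $x_n \in W_n$ with $f(x_n) \notin V$.

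It then remains to observe that $x_n \to x$: any open neighborhood $U$ of $x$ contains some $W_N$, and then $x_n \in W_n \subset W_N \subset U$ for all $n \geq N$. On the other hand $f(x_n)$ cannot converge to $f(x)$, because $V$ is an open neighborhood of $f(x)$ avoided by every $f(x_n)$. This contradicts sequential continuity, completing the argument. The only subtle point—and the reason the hypothesis is sharp—is the construction of the decreasing neighborhood basis, which is precisely where first countability enters.
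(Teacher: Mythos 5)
Your argument is correct and complete: the forward direction is the standard neighborhood argument, and the converse correctly uses a decreasing countable neighborhood basis at a point where $f^{-1}(V)$ fails to be a neighborhood to manufacture a sequence witnessing the failure of sequential continuity. The paper gives no proof of this proposition at all---it simply cites Engelking---and your proof is precisely the standard textbook argument behind that citation.
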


\subsection{Groupoids}
We give a brief recollection of some basic notions concerning groupoids. Let us start by giving the def\/inition:
\begin{defi} A groupoid $\mathcal{G}$ is a small category in which all the morphisms are isomorphisms. Let us denote by $\mathcal{G}_0$ the set of objects of this category (also called the base of $\mathcal{G}$) and by $\mathcal{G}_1$ the set of morphisms of this category. This implies the existence of the following f\/ive structure maps:
\begin{enumerate}\itemsep=0pt
\item[1)] the source map $s\colon \mathcal{G}_1\rightarrow\mathcal{G}_0$ which associates to each morphism its source,
\item[2)] the target map $t\colon \mathcal{G}_1\rightarrow\mathcal{G}_0$ which associates to each morphism its target,
\item[3)] the identity map ${\rm Id}\colon \mathcal{G}_0\rightarrow\mathcal{G}_1$ which associates to each object the identity over that object,
\item[4)] the inverse map $i\colon \mathcal{G}_1\rightarrow\mathcal{G}_1$ which associates to each morphism its inverse,
\item[5)] the multiplication (composition) map $\circ\colon \mathcal{G}_2\rightarrow\mathcal{G}_1$ which associates to each composable pair of morphisms its composition ($\mathcal{G}_2$ is the set of composable pairs).
\end{enumerate}
 A groupoid endowed with topologies on $\mathcal{G}_1$ and $\mathcal{G}_0$ which make all the structure maps continuous is called a topological groupoid. If additionally $\mathcal{G}_0$ and $\mathcal{G}_1$ are smooth manifolds, the source map is a surjective submersion and all the structure maps are smooth then the topological groupoid is called a Lie groupoid.
\end{defi}

\begin{rem} Note that the identity map and the target map restricted to the image of the identity map are inverses and so the identity map is an embedding. Hence, we can identify $\mathcal{G}_0$ with the image of the identity map. This in particular means that we can treat~$\mathcal{G}_0$ as a subspace of~$\mathcal{G}_1$.
\end{rem}

We denote by $\mathcal{G}_x$ the f\/ibers of the source map (source f\/ibers) and by $\mathcal{G}^x$ the f\/ibers of the target map (target f\/ibers). We also denote by $\mathcal{G}^y_x$ the set of morphisms with source $x$ and target~$y$. For a Lie groupoid $\mathcal{G}_x$, $\mathcal{G}^x$ and $\mathcal{G}^y_x$ are all closed embedded submanifolds of $\mathcal{G}_1$. What is more $\mathcal{G}^x_x$ are Lie groups. If~$V$ and~$U$ are subsets of $\mathcal{G}_0$ then we denote by $\mathcal{G}_U^V$ the set of all morphisms with source in $U$ and target in $V$. We present a couple of examples which will be of further interest to us:
\begin{egg} Given a smooth manifold (resp.\ topological space) $M$ there is a structure of a~Lie groupoid (resp.\ topological groupoid) with base~$M$ on $M\times M$. For this structure the source and target maps are projections on to the f\/irst and second factor. We call the groupoid $M\times M$ over M the pair groupoid.
\end{egg}

\begin{egg}
Let $G$ be a topological (resp.\ Lie) group and let $M$ be a topological space (resp.\ smooth manifold). $M\times G\times M$ is a topological (resp.\ Lie) groupoid over $M$ with source and target maps given by projections onto the f\/irst and third factor and composition law given by the formula:
\begin{gather*}
(x,g,y)\circ (y,h,z)=(x,gh,z).
\end{gather*}
Groupoids of this form are called trivial groupoids.
\end{egg}

There are many more examples of groupoids most of which are much more complicated then the ones just shown. The above examples will be used later on as they already exhibit some of the problems one faces when dealing with groupoids that don't arise for groups. In what follows we are also going to need a notion of morphism of groupoids:

\begin{defi} A continuous (resp.\ smooth) morphism of topological (resp.\ Lie) groupoids is a pair $(F,f)\colon \mathcal{G}\rightarrow\mathcal{H}$ where $F\colon \mathcal{G}_1\rightarrow\mathcal{H}_1$ and $f\colon \mathcal{G}_0\rightarrow\mathcal{H}_0$ are continuous (resp.\ smooth) maps which commute with the structure maps. If in addition both $F$ and $f$ are homeomorphisms (resp.\ dif\/feomorphisms) then $(F,f)$ is a continuous (resp.\ smooth) isomorphism. Furthermore, if f is the identity on $\mathcal{G}_0$ then the morphism $(F,f)$ is called a base preserving morphism.
\end{defi}

Throughout this paper we are going to use several special classes of groupoids:
\begin{defi} A groupoid $\mathcal{G}$ is said to be transitive if for each pair of points $x,y\in\mathcal{G}_0$ there exists a morphism with source $x$ and target~$y$.
\end{defi}

\begin{defi} A topological groupoid is said to be proper if the map $(s,t)\colon \mathcal{G}_1\rightarrow\mathcal{G}_0\times\mathcal{G}_0$ is proper.
\end{defi}

\begin{defi}
A topological groupoid is principal if:
\begin{enumerate}\itemsep=0pt
\item[1)] the restriction of the target map to any source f\/iber is a quotient map (we write $t_x$ for the restriction of the target map to the source f\/iber over $x\in\mathcal{G}_0$),
\item[2)] for each $x\in\mathcal{G}_0$ the division map $\delta_x\colon \mathcal{G}_x\times\mathcal{G}_x\rightarrow\mathcal{G}_1$ def\/ined by the formula $\delta_x(g,h)=g\circ h^{-1}$ is a quotient map.
\end{enumerate}

\begin{rem} It is worth noting that for a transitive topological groupoid $\mathcal{G}$ and a given source f\/iber $\mathcal{G}_x$ composition with $h\colon x\rightarrow y$ gives a homeomorphism $\tilde{h}\colon \mathcal{G}_x\rightarrow \mathcal{G}_y$ since it has an inverse (composition with $h^{-1}$) and conjugation by $h$ ($hgh^{-1}$ for $g\in\mathcal{G}_x^x$) gives a continuous isomorphism of topological groups $h^*\colon \mathcal{G}^x_x\rightarrow \mathcal{G}^y_y$ since it has an inverse (conjugation by $h^{-1}$). Hence, we write ``$\mathcal{G}_x$ (resp.~$\mathcal{G}_x^x$) has property $P$'' as shorthand for~``$\mathcal{G}_x$ (resp.\ $\mathcal{G}^x_x$) has property $P$ for some $x\in\mathcal{G}_0$ and equivalently for any $x\in\mathcal{G}_0$'' whenever this remark can be applied. Moreover, since $t_y(\tilde{h})=t_x$ and $\delta_y(\tilde{h},\tilde{h})=\delta_x$ we write ``$t_x$ (resp.\ $\delta_x$) has property $P$'' as shorthand for ``$t_x$ (resp.~$\delta_x$) has property $P$ for some $x\in\mathcal{G}_0$ and equivalently for any $x\in\mathcal{G}_0$'' whenever this remark can be applied.
\end{rem}

\end{defi}
\begin{defi} A topological groupoid is locally trivial if it is transitive and each point $x\in\mathcal{G}_1$ has an open neighbourhood $U$ such that $\mathcal{G}^U_U$ is base preserving continuously isomorphic to the trivial groupoid $U\times\mathcal{G}_x^x\times U$.
\end{defi}

\subsection{Cartan principal bundles}
In this section we give a brief recollection of principal bundles, Cartan principal bundles, how they relate to one another and their connection to transitive topological groupoids. A more detailed exposition of this subject can be found in \cite{McK1,McK2,P}.
\begin{defi} A Cartan principal bundle is a quadruple $(P,B,G,\pi)$, where $P$ and $B$ are topological spaces, $G$ is a topological group acting freely on $P$ and $\pi\colon P\rightarrow B$ is a surjective continuous map, with the following properties:
\begin{enumerate}\itemsep=0pt
\item[1)] $\pi$ is a quotient map with f\/ibers coinciding with the orbits of the action of $G$ on $P$,
\item[2)] the division map $\delta\colon P_{\pi}\rightarrow G$ with domain $P_{\pi}:=\{(u,v)\in P\times P\,|\, \pi(u)=\pi(v)\}$ def\/ined by the property $\delta(ug,u)=g$ is continuous.
\end{enumerate}
\end{defi}

We are also going to need a notion of morphism between such bundles:
\begin{defi}

A morphism of Cartan principal bundles is a triple $(F,f,\phi)\colon (P,B,G,\pi)\rightarrow (P',B',G',\pi')$, where $F\colon P\rightarrow P'$ and $f\colon B\rightarrow B'$ are continuous functions and $\phi\colon G\rightarrow G'$ is a~continuous morphism of topological groups such that:
\begin{gather*}
\pi'\circ f=f\circ\pi, \qquad F(pg)=F(p)\phi(g)
\end{gather*}
for $p\in P$ and $g\in G$. A morphism of Cartan principal bundles is said to be base preserving if $B=B'$ and $f={\rm Id}_B$.
\end{defi}

A stronger and better known object is the following:
\begin{defi} A principal bundle is a quadruple $(P,B,G,\pi)$, where $P$ and $B$ are topological spaces, $G$ is a topological group acting freely on $P$ and $\pi\colon P\rightarrow B$ is a surjective continuous map, with the following properties:
\begin{enumerate}\itemsep=0pt
\item[1)] the f\/ibers of $\pi$ coincide with the orbits of the action of $G$,
\item[2)] (local triviality) There is an open covering $U_i$ of $B$ and continuous maps $\sigma_i\colon U_i\rightarrow P$ such that $\pi\circ\sigma_i={\rm Id}_{U_i}$.
\end{enumerate}
A principal bundle is said to be smooth if $P$ and $B$ are smooth manifolds, $G$ is a Lie group, and the action, projection and $\sigma_i$ are smooth maps.
\end{defi}

We sometimes refer to principal bundles as continuous principal bundles. It is known that a~principal bundle is a Cartan principal bundle and that a Cartan principal bundle which is locally trivial is principal (cf.~\cite{McK1} and \cite{P}). We also present the following important result from~\cite{P}:
\begin{tw}\label{Slice} A Cartan principal bundle $(P,B,G,\pi)$ with $G$ a Lie group and $P$ Tychonoff is locally trivial.
\end{tw}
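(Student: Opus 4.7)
The strategy is to build continuous local sections of $\pi$; local triviality then follows immediately. Explicitly, given a continuous section $\sigma\colon U\to P$ with $\pi\circ\sigma={\rm Id}_U$, the map
\[
\Phi\colon U\times G \to \pi^{-1}(U), \qquad (u,g)\mapsto \sigma(u)\cdot g,
\]
is a continuous bijection: surjectivity comes from the fibers of $\pi$ being orbits, and injectivity from freeness of the action. The continuous division map yields the explicit inverse $\Phi^{-1}(p)=\bigl(\pi(p),\,\delta\bigl(p,\sigma(\pi(p))\bigr)\bigr)$, so $\Phi$ is a homeomorphism and provides the desired trivialization $\pi^{-1}(U)\cong U\times G$.

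To produce the local section I would invoke the slice theorem for Cartan $G$-spaces. Given $p_0\in\pi^{-1}(b_0)$, the goal is to construct a subset $S\subset P$ with $p_0\in S$ such that $G\cdot S$ is open in $P$ and the action map $G\times S\to G\cdot S$, $(g,s)\mapsto gs$, is a homeomorphism. Under freeness the projection $\pi$ then restricts to a continuous bijection $\pi|_S\colon S\to\pi(S)$; since $G\cdot S$ is a saturated open subset of $P$ and $\pi$ is a quotient map, $\pi(S)=\pi(G\cdot S)$ is open in $B$, and $\pi|_S$ is moreover open (again because $G\cdot W$ is saturated and open for every relatively open $W\subset S$). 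Consequently $\sigma:=(\pi|_S)^{-1}\colon\pi(S)\to P$ is a continuous local section through $p_0$.

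The main obstacle is the construction of the slice $S$ itself, and this is where all three hypotheses are used in earnest. The Lie group structure on $G$ furnishes, around the identity, a Euclidean chart (via the exponential map) that separates the orbit direction from transverse directions. The continuous division map translates this local structure of $G$ into a local decomposition of $P$ along the orbit through $p_0$: more precisely, on a small enough neighbourhood $N$ of $p_0$ the map $V\times N\to P$, $(g,n)\mapsto gn$, from a chart $V\ni e$ is open and injective, and continuity of $\delta$ ensures that one can read off the ``$V$-component'' of a point in $VN$ continuously. Finally, the Tychonoff hypothesis on $P$ supplies the continuous real-valued functions needed to carve out a transverse slice $S\subset N$ as a level set of such an orbit-projection-type map.

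Verifying that the resulting $S$ has the required properties---openness of $G\cdot S$, and the fact that $(g,s)\mapsto gs$ gives a homeomorphism $G\times S\to G\cdot S$---is the technical heart of the argument and requires delicate, repeated use of the continuity of $\delta$. This is essentially Palais's argument, and I would expect the Tychonoff hypothesis to be indispensable precisely at the step where one needs enough continuous functions on $P$ to perform the transverse separation. Once the slice is in hand, assembling the local trivialization from $\sigma=(\pi|_S)^{-1}$ is straightforward as described above.
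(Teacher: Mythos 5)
The paper does not actually prove this statement---it is imported without proof from Palais \cite{P}---and your outline follows exactly the route of that reference: Palais's slice theorem for Cartan $G$-spaces (which is where the Lie-group and Tychonoff hypotheses enter) produces a slice through each point, freeness turns the slice into a continuous local section, and the continuous division map upgrades the section to a trivialization $\pi^{-1}(U)\cong U\times G$ via your map $\Phi$ and its explicit inverse. The steps you carry out in detail (slice $\Rightarrow$ open local section $\Rightarrow$ homeomorphism $\Phi$) are correct, and the one step you defer---the construction of the slice itself---is precisely the content of the cited paper, so your proposal matches the source's approach rather than diverging from it.
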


We also wish to recall the following well known equivalence (cf.~\cite{MW}):
\begin{tw} A principal bundle $(P,B,G,\pi)$ with $G$ a Lie group and~$B$ a smooth manifold is continuously isomorphic through a~$($base preserving isomorphism$)$ to a unique $($up to smooth isomorphism of smooth principal bundles$)$ smooth principal bundle.
\end{tw}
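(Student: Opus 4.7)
The plan is to reduce the problem to smoothing \v{C}ech cocycles and, for uniqueness, to smoothing sections of a smooth fiber bundle.

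For existence, choose an open cover $\{U_i\}$ of $B$ by smooth coordinate charts together with continuous local trivializing sections $\sigma_i : U_i \to P$ supplied by local triviality. These induce continuous transition functions $g_{ij}: U_i \cap U_j \to G$ characterized by $\sigma_j(x) = \sigma_i(x)\cdot g_{ij}(x)$; they satisfy the \v{C}ech cocycle identity $g_{ij} g_{jk} = g_{ik}$ on triple intersections. Since $G$ is a Lie group and the $U_i \cap U_j$ are open subsets of the smooth manifold $B$, continuous maps $U_i \cap U_j \to G$ admit smooth approximations (via a smooth embedding $G \hookrightarrow \mathbb{R}^N$ and a tubular retraction onto $G$). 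Performing these approximations inductively over a locally finite refinement and correcting along the way by a continuous $0$-cochain $h_i : U_i \to G$, one obtains a smooth cocycle $\tilde g_{ij}$ cohomologous to the original in the sense that $\tilde g_{ij} = h_i g_{ij} h_j^{-1}$ on $U_i\cap U_j$. Gluing the local pieces $U_i \times G$ along $\tilde g_{ij}$ produces a smooth principal $G$-bundle $P'$ over $B$, and the family $\{h_i\}$ assembles into a continuous base-preserving principal-bundle isomorphism $P \to P'$.

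For uniqueness, suppose $P'$ and $P''$ are smooth principal bundles each continuously base-preservingly isomorphic to $P$; composing gives a continuous base-preserving isomorphism $\Phi : P' \to P''$. Such a $\Phi$ is equivalent to a continuous section of the associated gauge bundle $P' \times_G G \to B$ (with $G$ acting on itself by conjugation on the right), which is a smooth fiber bundle with typical fiber the Lie group $G$ over the smooth manifold $B$. Applying the smooth approximation theorem for continuous sections of a smooth fiber bundle produces a smooth section $C^0$-close to, and in fact homotopic to, the given one, which in turn corresponds to a smooth base-preserving isomorphism $P' \to P''$.

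The main obstacle is the cocycle-smoothing step: naively smoothing each $g_{ij}$ separately will destroy the identity on triple intersections. The standard remedy is an inductive scheme that smooths on successively larger pieces of a locally finite cover, using bump functions supported in smaller sub-covers to interpolate between the already smoothed portion and the untouched part, so that the cocycle condition survives at every stage and the correction cochain $\{h_i\}$ is built in parallel. As an alternative one can bypass the combinatorial bookkeeping by invoking the classifying-space viewpoint: principal $G$-bundles (continuous or smooth) over $B$ are classified by homotopy classes of maps into a suitable model of $BG$, and Whitney's approximation theorem lets any continuous classifying map from the smooth manifold $B$ be deformed to a smooth one, identifying the two classifications and yielding both existence and uniqueness in a uniform manner.
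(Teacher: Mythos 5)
The paper does not prove this statement at all: it is recalled as a known equivalence and attributed to M\"uller--Wockel \cite{MW}, so there is no in-paper argument to compare against. Your sketch is the standard finite-dimensional proof and its overall architecture is sound: smoothing a continuous \v{C}ech cocycle by an inductive correction over a locally finite cover (existence), and smoothing a continuous section of a smooth fiber bundle (uniqueness), with the classifying-space/Whitney route as a legitimate alternative since $B$ is a paracompact smooth manifold. Two points deserve tightening. First, in the uniqueness step the bundle whose sections encode base-preserving $G$-morphisms $P'\to P''$ is the quotient $\bigl(P'\times_B P''\bigr)/G$ (a smooth fiber bundle with fiber $G$), not the gauge bundle $P'\times_G G$ with the conjugation action --- the latter parametrizes automorphisms of a single bundle, and identifying the two uses a continuous isomorphism $P'\cong P''$ that would destroy the smooth structure you need; with the correct bundle the argument goes through, since any base-preserving $G$-morphism over the identity is automatically an isomorphism. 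Second, the cocycle-smoothing induction is the entire analytic content of the existence half, and you only gesture at it; as written it is a plan rather than a proof, though it is the same plan carried out in the literature you would cite. Compared with simply invoking \cite{MW} as the paper does, your approach is more elementary and self-contained but proves less: M\"uller--Wockel establish the equivalence for infinite-dimensional structure groups, where the Whitney-embedding trick for smoothing maps into $G$ is unavailable, whereas your argument is genuinely finite-dimensional.
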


We are now going to present important constructions from \cite{McK1} and \cite{McK2} which relate the above notions of principal and Cartan principal bundles to locally trivial and principal groupoids respectively. Given a principal groupoid $\mathcal{G}$ the quadruple $(\mathcal{G}_x,\mathcal{G}_0,\mathcal{G}^x_x,t_x)$ constitutes a Cartan principal bundle for any point $x\in\mathcal{G}_0$ (this is called the \emph{vertex bundle of} $\mathcal{G}$ \emph{at} $x$). It is easy to see that given a morphism of groupoids $(F,f)\colon \mathcal{G}\rightarrow\mathcal{G}'$ the restriction of the map~$F$ to~$\mathcal{G}_x$ gives a morphism of bundles $F|_{\mathcal{G}_x}\colon \mathcal{G}_x\rightarrow\mathcal{G}_{f(x)}$. It is also worth noting that even though this construction is dependent on the choice of~$x$ all the vertex bundles are continuously isomorphic by use of translations (cf.~\cite{McK1}). On the other hand given a Cartan principal bundle $(P,B,G,\pi)$ there exists a structure of a topological groupoid over $B$ on $(P\times P)\slash G$ (this is called the \emph{gauge groupoid of} $(P,B,G,\pi)$). Furthermore, a morphism of principal bundles $(F,f,\phi)\colon (P,B,G,\pi)\rightarrow (P',B',G',\pi')$ induces a morphism of gauge groupoids $F^*$ def\/ined by $F^*([(u,v)])=[F(u),F(v)]$. It is apparent from the form of the induced morphisms that a base preserving morphism of Cartan principal bundles induces a base preserving morphism of the corresponding gauge groupoids and that a base preserving morphism of principal groupoids induces a base preserving morphism of vertex bundles. We give the following 3 theorems which were proven in \cite{McK1} and \cite{McK2}:
\begin{tw}\label{CPB} The constructions above are mutually inverse $($up to a continuous base preserving isomorphism$)$ and give a one to one correspondence between continuous isomorphism classes of Cartan principal bundles and continuous isomorphism classes of principal groupoids.
\end{tw}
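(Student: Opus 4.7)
The plan is to exhibit explicit base-preserving continuous isomorphisms realizing both round-trips, and argue that base-preserving morphisms in one category induce base-preserving morphisms in the other in a way compatible with these identifications. The only nontrivial analytical input is the interaction between the quotient maps that define ``principal'' (on the groupoid side) and ``Cartan'' (on the bundle side); everything else is a bookkeeping check that the natural candidate maps intertwine the structure.

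For the round-trip starting from a Cartan principal bundle $(P,B,G,\pi)$, I would form the gauge groupoid $\mathcal{G}=(P\times P)/G$, fix a basepoint $x_0\in B$ and $u_0\in\pi^{-1}(x_0)$, and define $\Phi\colon P\to\mathcal{G}_{x_0}$ by $\Phi(u)=[(u,u_0)]$ and $\phi\colon G\to\mathcal{G}^{x_0}_{x_0}$ by $\phi(g)=[(u_0g,u_0)]$. Because $G$ acts freely and transitively on each $\pi$-fiber, every class in $\mathcal{G}_{x_0}$ has a unique representative of the form $(u,u_0)$, so $\Phi$ is a bijection; continuity of $\Phi$ follows from continuity of the quotient $P\times P\to(P\times P)/G$, while the explicit inverse $[(u,v)]\mapsto u\cdot\delta(u_0,v)$ is continuous because the Cartan division map $\delta$ is. One then checks, by direct computation with representatives, that $t_{x_0}\circ\Phi=\pi$ and $\Phi(u\cdot g)=\Phi(u)\cdot\phi(g)$, that $\phi$ is a continuous group isomorphism, and that $\Phi$ is a base-preserving isomorphism of Cartan principal bundles.

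For the reverse round-trip I would start with a principal groupoid $\mathcal{G}$, form the vertex bundle $(\mathcal{G}_x,\mathcal{G}_0,\mathcal{G}^x_x,t_x)$, and take its gauge groupoid $(\mathcal{G}_x\times\mathcal{G}_x)/\mathcal{G}^x_x$. The division map $\delta_x\colon\mathcal{G}_x\times\mathcal{G}_x\to\mathcal{G}_1$, $(g,h)\mapsto g\circ h^{-1}$, is constant on $\mathcal{G}^x_x$-orbits, so by Proposition \ref{PtQ} it descends to a continuous bijection $\Psi\colon(\mathcal{G}_x\times\mathcal{G}_x)/\mathcal{G}^x_x\to\mathcal{G}_1$. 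Writing $\delta_x=\Psi\circ q$ where $q$ is the quotient projection, both $q$ and $\delta_x$ are quotient maps (the latter by the very definition of \emph{principal}), and an elementary diagram chase then forces $\Psi$ itself to be a quotient map, hence a homeomorphism. Compatibility with source, target and multiplication is a direct check on representatives.

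Finally, for the morphism correspondence, restriction of a base-preserving groupoid morphism to a source fiber produces a base-preserving CPB morphism of vertex bundles, and a base-preserving CPB morphism $(F,f,\phi)$ descends to $[(u,v)]\mapsto[(F(u),F(v))]$ on gauge groupoids; the above explicit isomorphisms $\Phi$ and $\Psi$ intertwine these two assignments. The main obstacle is the single homeomorphism step in the preceding paragraph, where one must convert the quotient-map hypothesis in the definition of a principal groupoid into continuity of the inverse of $\Psi$; once that is in hand, the rest of the theorem reduces to verifying commuting diagrams on representatives, and passing from isomorphisms to isomorphism classes is automatic.
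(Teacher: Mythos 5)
The paper does not actually prove Theorem~\ref{CPB}: it is imported verbatim from Mackenzie \cite{McK1,McK2}, so there is no internal proof to compare against. Your argument is essentially the standard one from those references, and the key mechanism is correctly identified: the division map $\delta_x$ descends to a continuous bijection $\Psi\colon(\mathcal{G}_x\times\mathcal{G}_x)/\mathcal{G}^x_x\to\mathcal{G}_1$, and the hypothesis that $\delta_x$ is a quotient map is precisely what upgrades $\Psi$ to a homeomorphism (if $\delta_x=\Psi\circ q$ with both $\delta_x$ and $q$ quotient maps, then $\Psi^{-1}(U)$ open if\/f $q^{-1}\Psi^{-1}(U)=\delta_x^{-1}(U)$ open if\/f $U$ open). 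The one place you pass over too quickly is the continuity of $\Phi^{-1}\colon[(u,v)]\mapsto u\cdot\delta(u_0,v)$: this is a $G$-invariant continuous map on the saturated subset $P\times\pi^{-1}(x_0)$ of $P\times P$, and to descend it to the source f\/iber of the gauge groupoid you need the restriction of $P\times P\to(P\times P)/G$ to that subset to be a quotient map. The paper's proposition on restricting quotient maps covers only open or closed saturated subsets, and $\pi^{-1}(x_0)$ need be neither; the gap is closed by observing that the orbit projection of a group acting by homeomorphisms is always an \emph{open} map (the saturation of an open set is a union of homeomorphic images of it), and an open quotient map restricted to an arbitrary saturated subset is again a quotient map. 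With that observation supplied, together with the routine check that the gauge groupoid of a Cartan principal bundle is itself principal (so the correspondence is well def\/ined in both directions), your proposal is complete and matches the cited proof in structure.
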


\begin{tw}\label{PB} The constructions above give a one to one correspondence between continuous isomorphism classes of principal bundles and continuous isomorphism classes of locally trivial groupoids.
\end{tw}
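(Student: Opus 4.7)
The plan is to reduce to Theorem~\ref{CPB}, which already provides a bijection between continuous isomorphism classes of Cartan principal bundles and of principal groupoids via the vertex bundle and gauge groupoid constructions. Since a principal bundle is precisely a locally trivial Cartan principal bundle, it suffices to show that under this correspondence a Cartan principal bundle is locally trivial if and only if its gauge groupoid is locally trivial in the sense of the paper's definition; restricting the bijection of Theorem~\ref{CPB} to these subclasses then gives the theorem.

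For the forward direction, I would start with a principal bundle $(P,B,G,\pi)$ and local sections $\sigma_i\colon U_i \to P$ of $\pi$. For each $b \in B$ choose $U_i \ni b$ and define
\[
\phi_i\colon U_i \times G \times U_i \to \mathcal{G}^{U_i}_{U_i}, \qquad \phi_i(u,g,v) = [(\sigma_i(u)g, \sigma_i(v))],
\]
where $\mathcal{G} = (P \times P)/G$ is the gauge groupoid. A routine check shows $\phi_i$ is a base-preserving continuous morphism respecting source, target, inverses, and composition; its set-theoretic inverse is continuous because it can be written in terms of the continuous division map of the Cartan bundle, so it is a base-preserving continuous isomorphism onto $\mathcal{G}^{U_i}_{U_i}$, exhibiting the gauge groupoid as locally trivial.

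For the reverse direction, fix $x \in \mathcal{G}_0$ and consider the vertex bundle $(\mathcal{G}_x, \mathcal{G}_0, \mathcal{G}^x_x, t_x)$. Given $b \in \mathcal{G}_0$, pick an open neighbourhood $U \ni b$ together with a base-preserving continuous isomorphism $\psi\colon U \times \mathcal{G}^b_b \times U \to \mathcal{G}^U_U$, and, using transitivity, choose a morphism $h$ from $x$ to $b$. Then the map $u \mapsto h \circ \psi(b, e, u)$ (composed in whichever order matches the source/target convention) is a continuous section of $t_x$ over $U$ taking values in $\mathcal{G}_x$. Letting $b$ range over $\mathcal{G}_0$, these neighbourhoods cover the base and yield exactly the system of local sections needed for the vertex bundle to be a principal bundle.

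The main technical obstacle is verifying the continuity of the inverse of $\phi_i$ and of the section built from $\psi$ and $h$; both reduce to continuity of the Cartan division map together with the fact that $\psi$ is a topological isomorphism of groupoids. Once these continuity points are dispatched, the equivalence of local triviality on the two sides is established, and the bijection of Theorem~\ref{CPB} restricts to the desired one-to-one correspondence between principal bundles and locally trivial groupoids.
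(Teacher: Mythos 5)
The paper does not actually prove Theorem~\ref{PB}: it is imported verbatim from \cite{McK1,McK2} alongside Theorems~\ref{CPB} and~\ref{SPB}, so there is no in-paper argument to compare yours against. That said, your reduction---restrict the bijection of Theorem~\ref{CPB} to the subclasses of locally trivial objects on each side, and verify that local triviality transfers across the vertex-bundle/gauge-groupoid constructions---is the standard route (it is essentially Mackenzie's), and both of your explicit maps $\phi_i(u,g,v)=[(\sigma_i(u)g,\sigma_i(v))]$ and $u\mapsto \psi(u,e,b)\circ h$ are correct and do what you claim.

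Two points deserve to be made explicit rather than assumed. First, in the reverse direction you invoke the vertex bundle and Theorem~\ref{CPB} for a locally trivial groupoid, but Theorem~\ref{CPB} applies to \emph{principal} groupoids; the paper records that a locally trivial Cartan bundle is principal, but nowhere states the groupoid-side analogue that a locally trivial groupoid is principal. You need it: the local trivializations furnish continuous local sections $u\mapsto\psi(u,e,b)\circ h$ of $t_x$, which show that $t_x$ and $\delta_x$ are open (near any point they factor through a homeomorphism onto a product followed by a projection), hence quotient maps, so the vertex bundle is indeed a Cartan principal bundle and the correspondence may be applied. Second, the continuity of $\phi_i^{-1}$ is not just ``write it in terms of $\delta$'': you should note that the preimage of $\mathcal{G}^{U_i}_{U_i}$ in $P\times P$ is the open saturated set $\pi^{-1}(U_i)\times\pi^{-1}(U_i)$, so the restriction of the projection $P\times P\to(P\times P)/G$ to it is still a quotient map, and then the $G$-invariant continuous formula $(p,q)\mapsto\bigl(\pi(p),\,\delta\bigl(p\cdot\delta(q,\sigma_i(\pi(q)))^{-1},\sigma_i(\pi(p))\bigr),\,\pi(q)\bigr)$ descends by the universal property (Proposition~\ref{PtQ}). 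With those two steps filled in, your argument is complete.
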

\begin{tw}\label{SPB} The constructions above give a one to one correspondence between smooth isomorphism classes of smooth principal bundles and smooth isomorphism classes of locally trivial Lie groupoids.
\end{tw}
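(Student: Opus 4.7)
The plan is to use Theorem \ref{PB} (the continuous version of this correspondence for locally trivial groupoids) as the backbone, and upgrade everything from the continuous to the smooth setting by checking that each construction preserves smooth structures and that smooth base preserving isomorphisms correspond.

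First I would verify that the gauge groupoid of a smooth principal bundle $(P,B,G,\pi)$ is a locally trivial Lie groupoid. The diagonal right action of $G$ on $P\times P$ is smooth, free, and proper (properness follows from the smooth local triviality of $\pi$), so $(P\times P)/G$ inherits a canonical smooth manifold structure for which the quotient map is a surjective submersion. The two projections $P\times P\to B$ are smooth submersions, and since they are $G$-invariant they descend to smooth surjective submersions giving the source and target. The identity, inverse, and composition maps descend from manifestly smooth $G$-equivariant maps on $P\times P$ and $(P\times P)\times_B(P\times P)$. Local triviality on the groupoid side then follows by pulling back smooth local sections of $\pi$ to trivializations $\mathcal{G}_U^U\cong U\times G\times U$.

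Second I would verify that the vertex bundle $(\mathcal{G}_x,\mathcal{G}_0,\mathcal{G}_x^x,t_x)$ of a locally trivial Lie groupoid $\mathcal{G}$ is a smooth principal bundle. Since $s$ is a submersion, $\mathcal{G}_x$ is a closed embedded submanifold, $\mathcal{G}_x^x$ is a Lie group acting smoothly and freely on $\mathcal{G}_x$ by right multiplication, and $t_x$ is smooth. Local triviality of $\mathcal{G}$ provides an open cover $\{U_i\}$ of $\mathcal{G}_0$ together with smooth base preserving isomorphisms $\mathcal{G}_{U_i}^{U_i}\cong U_i\times\mathcal{G}_x^x\times U_i$; restricting these isomorphisms to the source fiber over a chosen point of $U_i$ produces smooth local sections of $t_x$, so the bundle is smoothly locally trivial.

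Third I would check functoriality for smooth base preserving isomorphisms. A smooth base preserving isomorphism of principal bundles induces the map $F^*([u,v])=[F(u),F(v)]$, which is smooth because it descends from the smooth map $F\times F$ via the smooth quotient. Conversely, a smooth base preserving isomorphism of locally trivial Lie groupoids restricts to a smooth morphism between source fibers, hence between vertex bundles. Combined with Theorem~\ref{PB}, which already guarantees that the two constructions are mutually inverse up to continuous base preserving isomorphism, the observation that all induced continuous isomorphisms are in fact smooth (being given by the same explicit formulas between smooth manifolds) yields the desired bijection of smooth isomorphism classes.

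The main obstacle I anticipate is rigorously producing the smooth structure on $(P\times P)/G$ and verifying the smoothness of the descended structure maps. The cleanest route is to exploit the local trivializations of $P$ directly: a pair of charts $U_i\times G$, $U_j\times G$ of $P$ gives a chart $U_i\times U_j\times G$ on $(P\times P)/G$ via $[(b,g_1),(b',g_2)]\leftrightarrow(b,b',g_1g_2^{-1})$, and the smoothness of the transition functions of $P$ implies the smoothness of the transitions between such charts, so the smooth structure and the smoothness of source, target, identity, inverse, and composition can all be read off these explicit charts.
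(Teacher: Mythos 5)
The paper does not prove this statement at all: Theorem~\ref{SPB} is quoted as a known result from Mackenzie's books \cite{McK1,McK2}, so there is no in-paper argument to compare yours against. Judged on its own, your sketch is essentially the standard proof from that reference, and its overall architecture is sound: gauge groupoid of a smooth principal bundle is a locally trivial Lie groupoid, vertex bundle of a locally trivial Lie groupoid is a smooth principal bundle, both constructions are functorial for smooth base preserving isomorphisms, and the unit/counit of the continuous correspondence of Theorem~\ref{PB} are then checked to be smooth. Your fallback of building the smooth structure on $(P\times P)/G$ from explicit charts $U_i\times U_j\times G$ via $[(b,g_1),(b',g_2)]\leftrightarrow (b,b',g_1g_2^{-1})$ is exactly how Mackenzie does it, and it is the cleanest route since it makes the smoothness of all five structure maps visible at once.

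Two places need tightening. First, if you take the quotient-manifold-theorem route, the properness of the diagonal $G$-action on $P\times P$ is not an immediate consequence of local triviality alone; you should say that local triviality gives a continuous (indeed smooth) division map on $P\times_B P$, that $P\times_B P$ is closed in $P\times P$ because $B$ is Hausdorff, and deduce properness from there --- or simply commit to the chart-based construction, which sidesteps this. Second, the phrase ``the induced continuous isomorphisms are in fact smooth, being given by the same explicit formulas'' is not an argument: a continuous bijection between smooth manifolds given by a formula need not be a diffeomorphism. What actually works is that the counit map $\mathrm{gauge}(\mathcal{G}_x)\to\mathcal{G}$, $[(g,h)]\mapsto g\circ h^{-1}$, descends from the smooth division map through the surjective submersion $\mathcal{G}_x\times\mathcal{G}_x\to(\mathcal{G}_x\times\mathcal{G}_x)/\mathcal{G}_x^x$ and is therefore smooth, while its inverse is seen to be smooth by writing it in the local trivializations (locally it sends $g\colon u\to v$ to $[(\sigma_i(v)\cdot\delta(g\sigma_j(u)),\sigma_j(u))]$ for local sections $\sigma_i$, $\sigma_j$); a parallel local-section computation handles the unit isomorphism $P\to\mathrm{vertex}(\mathrm{gauge}(P))$. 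With those two repairs the proof is complete.
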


\begin{rem} In the previous theorem one can weaken local triviality to transitivity since in the case of Lie groupoids these notions are equivalent (cf.~\cite[Corollary~1.9]{McK1}).
\end{rem}

\section{Hilbert's f\/ifth problem for transitive groupoids}
\subsection{Main results}
In the following section by ``unique Lie groupoid'' we mean unique up to a smooth base preserving isomorphism. We start with the following observation.
\begin{tw}\label{2TH5}
Let $\mathcal{G}$ be a principal groupoid with a smooth base $\mathcal{G}_0$ and Tychonoff source fi\-bers~$\mathcal{G}_x$ for which the topological groups $\mathcal{G}_x^x$ are locally Euclidean. Then $\mathcal{G}$ is continuously isomorphic to a unique Lie groupoid through a base preserving isomorphism.
\end{tw}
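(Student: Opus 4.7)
The plan is to funnel everything through the correspondence between principal groupoids and Cartan principal bundles (Theorem~\ref{CPB}), upgrade the vertex group to a Lie group via Hilbert's fifth problem for topological groups, promote the resulting Cartan principal bundle to a genuine principal bundle using Theorem~\ref{Slice}, endow that bundle with a smooth structure via the smoothing theorem recalled just after Theorem~\ref{Slice}, and finally return to groupoids through Theorem~\ref{SPB}.

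Concretely, I would fix $x\in\mathcal{G}_0$ and form the vertex bundle $(\mathcal{G}_x,\mathcal{G}_0,\mathcal{G}^x_x,t_x)$, which is a Cartan principal bundle by the construction preceding Theorem~\ref{CPB}. Since $\mathcal{G}^x_x$ is a locally Euclidean topological group, the classical Hilbert fifth problem produces a continuous isomorphism of topological groups $\phi\colon \mathcal{G}^x_x\to H$ onto a Lie group $H$. Transporting the $\mathcal{G}^x_x$-action on $\mathcal{G}_x$ along $\phi^{-1}$ yields a Cartan principal bundle $(\mathcal{G}_x,\mathcal{G}_0,H,t_x)$, and the triple $(\mathrm{Id}_{\mathcal{G}_x},\mathrm{Id}_{\mathcal{G}_0},\phi)$ is a base-preserving isomorphism of Cartan principal bundles between the two. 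Because $\mathcal{G}_x$ is Tychonoff and $H$ is a Lie group, Theorem~\ref{Slice} applies and the new bundle is locally trivial, hence an ordinary principal bundle. The smooth base $\mathcal{G}_0$ and Lie structure group $H$ then let me invoke the smoothing theorem stated right after Theorem~\ref{Slice}, giving a continuous base-preserving isomorphism to a smooth principal bundle $(P',\mathcal{G}_0,H,\pi')$, unique up to smooth isomorphism. Theorem~\ref{SPB} converts $(P',\mathcal{G}_0,H,\pi')$ into a locally trivial Lie groupoid $\tilde{\mathcal{G}}$, and Theorem~\ref{CPB} transports the composition of the base-preserving isomorphisms built above into a continuous base-preserving isomorphism $\mathcal{G}\cong\tilde{\mathcal{G}}$.

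For uniqueness, suppose $\tilde{\mathcal{G}}^1$ and $\tilde{\mathcal{G}}^2$ are both Lie groupoids continuously base-preservingly isomorphic to $\mathcal{G}$. Both are locally trivial by the remark following Theorem~\ref{SPB}, so via Theorem~\ref{PB} their vertex bundles are continuously base-preservingly isomorphic principal bundles which, by Theorem~\ref{SPB}, already carry smooth structures. The uniqueness clause of the smoothing theorem then forces these smooth bundles to be smoothly isomorphic, and a last application of Theorem~\ref{SPB} produces the desired smooth base-preserving isomorphism $\tilde{\mathcal{G}}^1\cong\tilde{\mathcal{G}}^2$.

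The main technical point is the change-of-structure-group step: one has to check that transporting the action along $\phi^{-1}$ really gives a Cartan principal bundle in the sense of the paper (in particular that the new division map is continuous, which follows from continuity of the old division map and of $\phi$) and that the triple $(\mathrm{Id}_{\mathcal{G}_x},\mathrm{Id}_{\mathcal{G}_0},\phi)$ fits the paper's definition of a base-preserving morphism of Cartan principal bundles, so that Theorem~\ref{CPB} indeed turns it into a base-preserving isomorphism of gauge groupoids. Once this bookkeeping is done, everything else is a straightforward composition of the cited equivalences.
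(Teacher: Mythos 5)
Your proposal follows exactly the paper's own route: vertex bundle via Theorem~\ref{CPB}, Hilbert's fifth problem for $\mathcal{G}^x_x$, local triviality from Theorem~\ref{Slice}, the smoothing theorem for principal bundles over a smooth base, and return via Theorem~\ref{SPB}. The only differences are that you make explicit the change-of-structure-group bookkeeping and the uniqueness argument, both of which the paper leaves implicit; the argument is correct.
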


\begin{proof} By the correspondence in Theorem \ref{CPB} we can associate to $\mathcal{G}$ a Cartan principal bundle. Due to Hilbert's f\/ifth problem $\mathcal{G}^x_x$ is a Lie group. Since the source f\/ibers are Tychonof\/f we can see by Theorem~\ref{Slice} that this bundle is locally trivial and hence its gauge groupoid (which is continuously isomorphic to $\mathcal{G}$ through a base preserving isomorphism) is locally trivial as well thanks to the correspondence in Theorem~\ref{PB}. Hence, $\mathcal{G}$ is locally trivial. Furthermore, since this bundle is a continuous principal bundle of Lie groups over a smooth manifold it is continuously isomorphic to a unique smooth principal bundle. This continuous isomorphism induces a continuous isomorphism between the gauge groupoids of these two bundles. It now suf\/f\/ices to note that by Theorem~\ref{SPB} the gauge groupoid of a smooth principal bundle is a Lie groupoid.
\end{proof}

Note that if $\mathcal{G}_x$ is assumed to be a topological manifold then it is in fact Tychonof\/f and hence the above theorem can be applied. In order to further generalize this theorem we present an important technical result:
\begin{prop}\label{eq} Let $\mathcal{G}$ be a transitive topological groupoid with $\mathcal{G}_1$ first countable. Then the following conditions are equivalent:
\begin{enumerate}\itemsep=0pt
\item[$1)$] $t_x$ is a quotient map,
\item[$2)$] $t_x$ is open,
\item[$3)$] $t_x$ has the sequence covering property,
\item[$4)$] $\delta_x$ is a quotient map,
\item[$5)$] $\delta_x$ is open.
\end{enumerate}
Furthermore, if $(s,t)\colon \mathcal{G}_1\rightarrow \mathcal{G}_0\times\mathcal{G}_0$ is a quotient map, then the above properties hold.
\end{prop}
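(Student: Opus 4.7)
The plan is to split the equivalences into two clusters $(1)\Leftrightarrow(2)\Leftrightarrow(3)$ and $(4)\Leftrightarrow(5)$, then link them via $(2)\Leftrightarrow(5)$. The implications $(2)\Rightarrow(1),(3)$ and $(5)\Rightarrow(4)$ are immediate from the fact that a continuous open surjection is a quotient map and from Proposition~\ref{SCP}. The converses within each cluster rest on one observation: the fibers of $t_x$ are precisely the orbits of the right $\mathcal{G}^x_x$-action $g\mapsto g\circ k$ on $\mathcal{G}_x$, and the fibers of $\delta_x$ are precisely the orbits of the diagonal right $\mathcal{G}^x_x$-action $(g,h)\mapsto(g\circ k,h\circ k)$ on $\mathcal{G}_x\times\mathcal{G}_x$. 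Since each such translation is a homeomorphism, for any open set $U$ the saturation $t_x^{-1}(t_x(U))=\bigcup_{k\in\mathcal{G}^x_x}U\cdot k$ (and its analogue for $\delta_x$) is a union of open sets; if $t_x$ (resp.\ $\delta_x$) is a quotient map this forces $t_x(U)$ (resp.\ $\delta_x(U)$) open, yielding $(1)\Rightarrow(2)$ and $(4)\Rightarrow(5)$.

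The genuinely non-trivial step, and the one I expect to be the main obstacle, is $(3)\Rightarrow(2)$. I would argue by contradiction: suppose $U\subseteq\mathcal{G}_x$ is open and $u\in U$ with $y=t(u)$, but $t_x(U)$ is not a neighbourhood of $y$. Since $\mathcal{G}_1$ is first countable so is $\mathcal{G}_0$ (via the identity embedding), so one finds a sequence $y_n\to y$ in $\mathcal{G}_0\setminus t_x(U)$. Sequence covering gives $g_n\to g'$ in $\mathcal{G}_x$ with $t(g_n)=y_n$ and $t(g')=y$, but $g'$ need not coincide with $u$. The key move is the translation $a:=g'^{-1}\circ u\in\mathcal{G}^x_x$: right multiplication by $a$ is a target-preserving homeomorphism of $\mathcal{G}_x$ with $g'\circ a=u$, so by continuity $g_n\circ a\to u$, hence $g_n\circ a\in U$ eventually and $y_n=t(g_n\circ a)\in t_x(U)$, contradicting the choice of the sequence.

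To bridge the two clusters I would use $(2)\Leftrightarrow(5)$. For $(5)\Rightarrow(2)$, transitivity yields the identity $\delta_x(U\times\mathcal{G}_x)=t^{-1}(t_x(U))$: the source coordinate of $u\circ h^{-1}$ is $t(h)$ and ranges over all of $\mathcal{G}_0$ as $h$ varies in $\mathcal{G}_x$, while the target remains in $t_x(U)$. Openness of $\delta_x$ then makes $t^{-1}(t_x(U))$ open in $\mathcal{G}_1$, and since $\mathcal{G}_0$ is embedded in $\mathcal{G}_1$ as the identities (on which $t$ restricts to the identity map), intersecting recovers $t_x(U)$ as an open subset of $\mathcal{G}_0$. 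For $(2)\Rightarrow(5)$, given a basic open $V\times W$ and $(v_0,w_0)$ with $h_0=v_0\circ w_0^{-1}$, I check that $\delta_x(V\times W)$ is a neighbourhood of $h_0$ sequentially: for $k_n\to h_0$, apply the sequence covering property of $t_x$ (already equivalent to (2)) to $s(k_n)\to s(h_0)=t(w_0)$ to obtain $w_n\to w_0$ in $\mathcal{G}_x$ with $t(w_n)=s(k_n)$; then $v_n:=k_n\circ w_n\to v_0$ by continuity, so eventually $(v_n,w_n)\in V\times W$ and $k_n=v_n\circ w_n^{-1}\in\delta_x(V\times W)$. First countability of $\mathcal{G}_1$ upgrades this sequential neighbourhood condition to genuine openness.

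For the final assertion, $(s,t)$ being a quotient map forces $s$ to be a quotient map as well, since $s^{-1}(V)=(s,t)^{-1}(V\times\mathcal{G}_0)$ and $V\times\mathcal{G}_0$ is open in $\mathcal{G}_0\times\mathcal{G}_0$ exactly when $V$ is open in $\mathcal{G}_0$. The source fiber $\mathcal{G}_x=s^{-1}(x)$ is saturated for $(s,t)$ because every $(s,t)$-fiber through a point of $\mathcal{G}_x$ is of the form $\mathcal{G}_x^y\subseteq\mathcal{G}_x$, and it is closed in $\mathcal{G}_1$ as soon as $\{x\}$ is closed in $\mathcal{G}_0$. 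The restriction theorem for quotient maps on closed saturated subsets then shows that $(s,t)|_{\mathcal{G}_x}\colon\mathcal{G}_x\to\{x\}\times\mathcal{G}_0$ is a quotient map; composing with the canonical homeomorphism $\{x\}\times\mathcal{G}_0\cong\mathcal{G}_0$ identifies this with $t_x$, establishing~(1).
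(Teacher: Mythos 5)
Your proposal is correct in substance but takes a genuinely different route from the paper's. The paper outsources the equivalences $(1)\Leftrightarrow(2)$ and $(4)\Leftrightarrow(5)$ to Mackenzie, proves $(3)\Rightarrow(1)$ and $(3)\Rightarrow(4)$ by factoring $t_x$ and the restricted multiplication $\circ_x\colon\mathcal{G}_x\times\mathcal{G}^x\rightarrow\mathcal{G}_1$ through the orbit spaces of the $\mathcal{G}^x_x$-action and showing that the induced continuous bijections onto $\mathcal{G}_0$ and $\mathcal{G}_1$ are homeomorphisms (sequential continuity of the inverse, via first countability and the covering-sequence translation trick), and obtains $(5)\Rightarrow(3)$ by lifting the sequence $\mathrm{Id}_{y_n}\rightarrow\mathrm{Id}_y$ through $\circ_x$. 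You instead prove the two Mackenzie equivalences from scratch by the saturation argument (the $t_x$- or $\delta_x$-saturation of an open set is a union of translates, hence open), handle $(3)\Rightarrow(2)$ directly by contradiction, and bridge the two clusters through $(2)\Leftrightarrow(5)$ using the identity $\delta_x(U\times\mathcal{G}_x)=t^{-1}(t_x(U))$ together with the embedding of $\mathcal{G}_0$ as the identity section. Both routes use the same toolkit; yours is more self-contained, while the paper's factorization has the side benefit of exhibiting $\mathcal{G}_0$ and $\mathcal{G}_1$ as the topological quotients $\mathcal{G}_x\slash\mathcal{G}^x_x$ and $(\mathcal{G}_x\times\mathcal{G}^x)\slash\mathcal{G}^x_x$, which is the picture that feeds into the Cartan principal bundle correspondence used later. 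Your treatment of the final assertion coincides with the paper's (and is slightly more careful, since you note that closedness of $\mathcal{G}_x$ requires $\{x\}$ to be closed in $\mathcal{G}_0$).

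One step needs a small repair: in $(2)\Rightarrow(5)$ you apply the sequence covering property of $t_x$ to $s(k_n)\rightarrow t(w_0)$ and claim a lift $w_n\rightarrow w_0$. The sequence covering property only produces a lift converging to \emph{some} point $w'$ of the fiber $t_x^{-1}(t(w_0))$, not to the prescribed point $w_0$. Since $w'$ and $w_0$ lie in the same fiber, they differ by an element $a\in\mathcal{G}^x_x$ acting on the target-preserving side, and replacing $w_n$ by its translate $w_n\cdot a$ yields a lift converging to $w_0$ with unchanged targets --- this is exactly the ``key move'' you already spelled out in $(3)\Rightarrow(2)$, so the gap is cosmetic, but as written the step does not follow. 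With that adjustment the whole argument goes through.
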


\begin{proof} The equivalence of conditions (1) and (2) as well as the equivalence of conditions (4) and (5) can be found in~\cite{McK1}. That~(2) implies~(3) is immediate from Proposition \ref{SCP}.

For (3) implies (1) let us denote by $t'_x\colon \mathcal{G}_x\rightarrow\mathcal{G}_x\slash\mathcal{G}^x_x$ the quotient map of the group action of~$\mathcal{G}_x^x$ on~$\mathcal{G}_x$. Using Proposition \ref{PtQ} we get the following commutative diagram:
\begin{equation}\label{diag1}
\begin{tikzcd}
\mathcal{G}_x \arrow[rd, "t'_x"] \arrow[r, "t_x"] & \mathcal{G}_0 \\
& \mathcal{G}_x\slash\mathcal{G}^x_x,\arrow[u, "f"]
\end{tikzcd}
\end{equation}
where $f$ is a continuous bijection. By Proposition~\ref{seq} it is suf\/f\/icient to prove that $f^{-1}$ is sequentially continuous (since $\mathcal{G}_0$ is f\/irst countable as a subspace of a f\/irst countable space $\mathcal{G}_1$). Let us take a sequence $\{y_n\}_{n\in\mathbb{N}}$ convergent to $y$ in $\mathcal{G}_0$. We prove that its image $f^{-1}(y_n)$ is convergent to $f^{-1}(y)$. Using the sequence covering property of~$t_x$ we get a sequence \smash{$\{g_n\colon x\rightarrow y_n\}_{n\in\mathbb{N}}$} convergent to $g\colon x\rightarrow y$. Due to the continuity of~$t'_x$ the sequence $t'_x(h_n)$ converges to~$t'_x(h)$. This sequence and its limit are by the commutativity of the diagram and bijectivity of~$f$ the image of~$\{y_n\}_{n\in\mathbb{N}}$ and $y$ respectively through $f^{-1}$. Hence, $f^{-1}$ is continuous which in turn implies that~$f$ is a homeomorphism and so $t_x$ is a quotient map.

The proof of (3) implies (4) is similar to the previous implication. Let us f\/irst note that it suf\/f\/ices to prove that the multiplication map restricted to $\mathcal{G}_x\times\mathcal{G}^x$ is a quotient map (since $\delta_x$ is a composition of the multiplication map and the inverse map which is a~homeomorphism). Let us also note that~$\mathcal{G}_x^x$ acts on~$\mathcal{G}_x\times\mathcal{G}^x$ by
\begin{gather*}
(h_1,h_2)g=\big(h_1g,g^{-1}h_2\big).
\end{gather*}
We prove that the orbits of this action are precisely the f\/ibers of the multiplication map restricted to $\mathcal{G}_x\times\mathcal{G}^x$. It is apparent that an orbit of this action is contained in a f\/iber of the restricted multiplication map (since the composition $h_1gg^{-1}h_2$ is equal to $h_1h_2$). On the other hand given two elements $(f,g)$ and $(f',g')$ in a single f\/iber of the multiplication map we have $fg=f'g'$. This implies that ${\rm Id}_x=f^{-1}f'g'g^{-1}$ and hence $g'g^{-1}$ is inverse to $f^{-1}f'$. This means that by acting on $(f,g)$ with $f^{-1}f'$ we get $(f',g')$ which in turn implies that $(f,g)$ and $(f',g')$ belong to the same orbit of the group action. Hence, each f\/iber of the restricted multiplication is contained in some orbit of the group action (and so the f\/ibers and orbits coincide). Let us denote by $\circ_x$ this restricted multiplication map and by $\circ'_x\colon \mathcal{G}_x\times\mathcal{G}^x\rightarrow (\mathcal{G}_x\times\mathcal{G}^x)\slash\mathcal{G}^x_x$ the quotient map of the group action. As in the previous case we have the following commutative diagram:
\begin{equation}
\begin{tikzcd}
\mathcal{G}_x\times\mathcal{G}^x \arrow[rd, "\circ'_x"] \arrow[r, "\circ_x"] & \mathcal{G}_1 \\
& (\mathcal{G}_x\times\mathcal{G}^x)\slash\mathcal{G}^x_x.\arrow[u, "f"]
\end{tikzcd}
\end{equation}
The map $f$ is again bijective and continuous and we prove the continuity of $f^{-1}$. Since $\mathcal{G}_1$ is f\/irst countable it is suf\/f\/icient to prove sequential continuity by Proposition \ref{seq}. Let us take a~sequence $\{g_n\colon y'_n\rightarrow y_n\}_{n\in\mathbb{N}}$ convergent in $\mathcal{G}_1$ to $g\colon y'\rightarrow y$. This implies in particualr that the sequence $\{y_n\}_{n\in\mathbb{N}}$ converges to~$y$ in~$\mathcal{G}_0$. Hence, we can use the sequence covering property of~$t_x$ to produce a sequence $\{h_n\colon x\rightarrow y_n\}_{n\in\mathbb{N}}$ convergent to $h\colon x\rightarrow y$ in $\mathcal{G}_x$. This allows us to def\/ine a sequence $(h_n,h_n^{-1}g_n)$ which by continuity of multiplication is convergent to~$(h,h^{-1}g)$ in~$\mathcal{G}_x\times\mathcal{G}^x$. As before, using the commutativity of the diagram, bijectivity of $f$ and continuity of~$\circ'_x$ we conclude that the classes of this sequence represent the image of the initial sequence~$g_n$ and converge to~$g$ in~$(\mathcal{G}_x\times\mathcal{G}^x)\slash\mathcal{G}^x_x$. This implies that~$f$ is a homeomorphism and that~$\circ_x$ is in fact a quotient map.

For (5) implies (3) let us take a sequence $\{y_n\}_{n\in\mathbb{N}}$ convergent to~$y$. Since $\delta_x$ is open, so is the multiplication $\circ_x$. Hence, $\circ_x$ has the sequence covering property. Let $\{(g_n,h_n)\}_{n\in\mathbb{N}}$ be the sequence covering ${\rm Id}_{y_n}$ (which by continuity of the identity structure map converges to~${\rm Id}_y$) through~$\circ_x$. This implies that $\{g_n\}_{n\in\mathbb{N}}$ is a sequence covering $\{y_n\}_{n\in\mathbb{N}}$ through~$t_x$.

Finally, if $(s,t)$ is a quotient map then $(s,t)|_{\mathcal{G}_x}=(x,t_x)\colon \mathcal{G}_x\rightarrow\{x\}\times\mathcal{G}_0$ is also a quotient map (since $\mathcal{G}_x$ is a saturated closed subset of $\mathcal{G}_1$). This implies that $t_x$ is a quotient map.
\end{proof}

\begin{rem} This in particular means that a transitive groupoid $\mathcal{G}$ with $\mathcal{G}_1$ f\/irst countable is a principal groupoid if and only if $t_x$ is a quotient map.
\end{rem}
\begin{rem} One could add a sixth condition: ``$\delta_x$ has the sequence covering property'' to the previous theorem and prove its equivalence in a similar fashion. However, this condition is of little importance to this paper and hence we skip the proof for the sake of brevity.
\end{rem}
Using the two previous results we immediately arrive at the following conclusion:
\begin{tw}\label{strong}
Let $\mathcal{G}$ be a transitive topological groupoid with a smooth base $\mathcal{G}_0$, Tychonoff source fibers $\mathcal{G}_x$ and first countable space of morphisms $\mathcal{G}_1$ for which the topological groups $\mathcal{G}_x^x$ are locally Euclidean and the map~$t_x$ is a quotient map $($or equivalently has the sequence covering property$)$. Then $\mathcal{G}$ is continuously isomorphic to a unique Lie groupoid through a base preserving isomorphism.
\end{tw}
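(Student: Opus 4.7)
The plan is to recognize this statement as a direct combination of Proposition~\ref{eq} with Theorem~\ref{2TH5}. The hypotheses have been arranged so that no new technical work is required; the only task is to verify that $\mathcal{G}$ is in fact principal, after which Theorem~\ref{2TH5} applies verbatim.

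First I would invoke Proposition~\ref{eq}. Since $\mathcal{G}_1$ is first countable and $t_x$ is assumed to be a quotient map, condition~(1) of that proposition is satisfied, so all five equivalent conditions hold. In particular, condition~(4) gives that the division map $\delta_x$ is a quotient map. Combining this with the hypothesis that $t_x$ is a quotient map, both clauses in the definition of a principal groupoid are verified, so $\mathcal{G}$ is principal.

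Next I would apply Theorem~\ref{2TH5}. The remaining hypotheses of that theorem—smoothness of the base $\mathcal{G}_0$, Tychonoff source fibers $\mathcal{G}_x$, and locally Euclidean isotropy groups $\mathcal{G}_x^x$—are part of the assumptions of the present statement. Thus Theorem~\ref{2TH5} produces a Lie groupoid to which $\mathcal{G}$ is continuously isomorphic through a base preserving isomorphism, and this Lie groupoid is unique up to smooth base preserving isomorphism.

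There is really no substantive obstacle here: the work has already been done, first in Theorem~\ref{2TH5} (where Hilbert's fifth problem for groups, the slice theorem~\ref{Slice}, and the gauge/vertex bundle correspondence are combined) and then in Proposition~\ref{eq} (which, via first countability and the sequence covering trick, lets one replace the openness/quotient assumption on $\delta_x$ by the weaker-looking assumption on $t_x$). The present theorem is obtained simply by feeding the output of Proposition~\ref{eq} into the input of Theorem~\ref{2TH5}.
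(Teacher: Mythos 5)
Your proposal is correct and matches the paper exactly: the paper derives Theorem~\ref{strong} as an immediate consequence of Proposition~\ref{eq} (which, under first countability, upgrades the quotient-map hypothesis on $t_x$ to the full principality conditions, including that $\delta_x$ is a quotient map) followed by an application of Theorem~\ref{2TH5}. Your spelled-out verification of the two clauses in the definition of a principal groupoid is precisely the step the paper leaves implicit.
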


It is interesting that this result makes only the f\/irst countability assumptions on the topology of $\mathcal{G}_1$. We proceed to give a number of corollaries of this seemingly simple result:
\begin{tw}\label{SE} Let $\mathcal{G}$ be a transitive topological groupoid with a smooth base $\mathcal{G}_0$ for which the spaces~$\mathcal{G}_x^x$, $\mathcal{G}_1$ and~$\mathcal{G}_x$ are topological manifolds and the map $t_x$ is a quotient map $($or equivalently has the sequence covering property$)$. Then $\mathcal{G}$ is continuously isomorphic to a unique Lie groupoid through a base preserving isomorphism.
\end{tw}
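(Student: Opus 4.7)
The plan is to deduce this statement as a direct corollary of Theorem \ref{strong} by verifying that the topological manifold hypotheses imply the more technical conditions assumed there. Since Theorem \ref{strong} already packages the hard analytic content (Hilbert's fifth problem plus the slice theorem for Cartan principal bundles), the proof here should amount to a translation of manifold regularity properties into the language of that theorem.

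First I would unpack the definition of topological manifold as used throughout the paper: by the convention stated in the introduction, each of $\mathcal{G}_x^x$, $\mathcal{G}_1$, and $\mathcal{G}_x$ is Hausdorff, second countable, and locally Euclidean. Being locally Euclidean immediately gives the first countability of $\mathcal{G}_1$ required by Theorem \ref{strong}, and it is exactly the form in which the local Euclideanness of the isotropy group $\mathcal{G}_x^x$ is needed in order to invoke the solution of Hilbert's fifth problem for topological groups.

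Next I would argue that each source fibre $\mathcal{G}_x$, being a topological manifold in the above sense, is Tychonoff. This follows from standard general topology: a locally Euclidean Hausdorff space is locally compact Hausdorff, hence completely regular, and combined with the T\textsubscript{1} property gives the Tychonoff condition. (Alternatively, a second countable regular Hausdorff space is metrizable by Urysohn's metrization theorem, and metrizable spaces are Tychonoff.) This verifies the remaining topological hypothesis on $\mathcal{G}_x$ used in Theorem \ref{strong}.

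The remaining assumptions of Theorem \ref{strong} — transitivity of $\mathcal{G}$, smoothness of the base $\mathcal{G}_0$, and the quotient map property of $t_x$ (equivalently, by Proposition \ref{eq}, the sequence covering property) — are hypotheses of the present theorem, so Theorem \ref{strong} applies and yields a continuous base preserving isomorphism to a unique Lie groupoid. There is no substantive obstacle; the only point requiring any thought is confirming the Tychonoff property of $\mathcal{G}_x$, and this is a routine consequence of the manifold axioms.
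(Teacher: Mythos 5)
Your proposal is correct and matches the paper's own proof, which is simply ``Apply the previous theorem'' (Theorem \ref{strong}); you have just made explicit the routine verifications (a topological manifold is Tychonoff and first countable, and locally Euclidean in the sense needed for Hilbert's fifth problem) that the paper leaves implicit, and which it already flags in the remark following Theorem \ref{2TH5}.
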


\begin{proof} Apply the previous theorem.
\end{proof}

We present the following two forms of the Hilbert's f\/ifth problem for proper transitive groupoids (a stronger one and an elegant one):
\begin{tw}
Let $\mathcal{G}$ be a proper transitive topological groupoid with a smooth base $\mathcal{G}_0$, Tychonoff locally compact source fibers $\mathcal{G}_x$, a locally compact Hausdorff first countable space of morphisms~$\mathcal{G}_1$ for which the topological groups~$\mathcal{G}_x^x$ are locally Euclidean. Then $\mathcal{G}$ is continuously isomorphic to a unique Lie groupoid through a base preserving isomorphism.
\end{tw}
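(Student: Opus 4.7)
The plan is to reduce the statement to Theorem~\ref{strong} by verifying its hypotheses; the only hypothesis not directly among the assumptions is that $t_x$ is a quotient map. By Proposition~\ref{eq}, it suffices to show that $(s,t)\colon \mathcal{G}_1\rightarrow\mathcal{G}_0\times\mathcal{G}_0$ is a quotient map, and the properness assumption is tailor-made for this.

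First I would observe that $\mathcal{G}_0$, being a smooth manifold, is locally compact and Hausdorff, so the product $\mathcal{G}_0\times\mathcal{G}_0$ is locally compact and Hausdorff. Together with the assumption that $\mathcal{G}_1$ is locally compact Hausdorff, this puts us in the setting of the second closed mapping theorem cited in the preliminaries: a proper continuous map between locally compact Hausdorff spaces is closed. Applying it to $(s,t)$ (which is proper by assumption) shows that $(s,t)$ is a closed map. Transitivity of $\mathcal{G}$ means exactly that $(s,t)$ is surjective, and a closed continuous surjection is a quotient map by the first proposition of the excerpt. Hence $(s,t)$ is a quotient map.

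With $(s,t)$ a quotient map and $\mathcal{G}_1$ first countable, the ``furthermore'' clause of Proposition~\ref{eq} yields that $t_x$ is a quotient map. At this point all hypotheses of Theorem~\ref{strong} are in place: the base $\mathcal{G}_0$ is smooth, $\mathcal{G}_1$ is first countable, the source fibers $\mathcal{G}_x$ are Tychonoff, the isotropy groups $\mathcal{G}_x^x$ are locally Euclidean, and $t_x$ is a quotient map. Applying Theorem~\ref{strong} produces the desired unique Lie groupoid continuously isomorphic to $\mathcal{G}$ through a base preserving isomorphism.

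There is no real obstacle: the entire argument is a short verification that the ``proper plus locally compact Hausdorff'' package upgrades $(s,t)$ to a closed, hence quotient, map, after which the heavy lifting is done by the earlier results. The only point that deserves care is recording why local compactness is genuinely needed on both sides of $(s,t)$ so that the version of the closed mapping theorem used here actually applies; the local compactness of $\mathcal{G}_x$ assumed in the statement is not logically necessary for this particular deduction but is consistent with, and arguably motivates, the locally compact assumption on $\mathcal{G}_1$.
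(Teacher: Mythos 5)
Your proposal is correct and follows exactly the paper's own argument: apply the closed mapping theorem for proper maps between locally compact Hausdorff spaces to conclude $(s,t)$ is closed, hence a quotient map, then invoke Proposition~\ref{eq} to get that $t_x$ is a quotient map and finish with Theorem~\ref{strong}. Your additional remarks (spelling out why $\mathcal{G}_0\times\mathcal{G}_0$ is locally compact Hausdorff, and noting that local compactness of $\mathcal{G}_x$ is not actually used in this deduction) only make the argument more explicit than the paper's two-line version.
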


\begin{proof} Under these assumptions using the closed mapping theorem to $(s,t)$ we conclude that it is in fact closed and hence a quotient map. This implies that $t_x$ is a quotient map as (by Proposition~\ref{eq}) and hence we can apply Theorem~\ref{strong}.
\end{proof}

\begin{tw}\label{prop} Let $\mathcal{G}$ be a proper transitive topological groupoid with a smooth base~$\mathcal{G}_0$, for which the spaces~$\mathcal{G}_x^x$,~$\mathcal{G}_x$ and~$\mathcal{G}_1$ are topological manifolds. Then~$\mathcal{G}$ is continuously isomorphic to a unique Lie groupoid through a base preserving isomorphism.
\end{tw}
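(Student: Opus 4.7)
The plan is to reduce this statement to the previous theorem (the strong proper version) by verifying that the topological manifold hypotheses automatically encode all of its more technical point-set assumptions. Since the paper's blanket convention is that topological manifolds are Hausdorff and second countable, being a topological manifold is a very strong condition.

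First I would spell out what the manifold assumptions give. Because $\mathcal{G}_1$ is a topological manifold it is Hausdorff, locally compact, and first countable (second countability implies first countability). Because $\mathcal{G}_x$ is a topological manifold it is Hausdorff and locally compact; every locally compact Hausdorff space is Tychonoff, so $\mathcal{G}_x$ is Tychonoff as well. Because $\mathcal{G}_x^x$ is a topological manifold it is by definition locally Euclidean. In other words, under the present hypotheses the groupoid $\mathcal{G}$ satisfies every hypothesis of the preceding theorem: it is proper and transitive with smooth base, with Tychonoff locally compact source fibers, with a locally compact Hausdorff first countable morphism space, and with locally Euclidean isotropy.

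Having done this bookkeeping, I would invoke the previous theorem verbatim. Tracing back through its proof: the closed map theorem applies to $(s,t)\colon\mathcal{G}_1\rightarrow\mathcal{G}_0\times\mathcal{G}_0$ (both spaces locally compact Hausdorff, the map proper), so $(s,t)$ is closed and thus a quotient map; Proposition~\ref{eq} then yields that $t_x$ is a quotient map; and finally Theorem~\ref{strong} produces the desired unique Lie groupoid up to smooth base preserving isomorphism.

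There is essentially no obstacle here: the work was already done in the preceding two theorems, and the content of this version is purely cosmetic, recasting the hypotheses in the more geometric language of topological manifolds (and therefore closer to the spirit of Hilbert's fifth problem). The only thing one must be careful about is to explicitly cite the standing convention that topological manifolds are Hausdorff and second countable, since otherwise local compactness and first countability of $\mathcal{G}_1$ would not be automatic.
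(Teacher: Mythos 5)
Your proposal is correct and follows exactly the paper's route: the paper's proof of this theorem is literally ``Apply the previous theorem,'' and your bookkeeping (a topological manifold is Hausdorff, second countable, hence first countable, locally compact, and Tychonoff; locally Euclidean is part of the definition) is precisely the verification that makes that one-line proof legitimate. Nothing is missing.
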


\begin{proof} Apply the previous theorem.
\end{proof}

\begin{rem} This in particular means (when combined with the results of~\cite{AG}) that any transitive topological groupoid which satisf\/ies the assumptions of Theorem \ref{prop} is continuously isomorphic to a real analytic transitive groupoid.
\end{rem}
Finally, Theorem \ref{strong} also solves the problem for groupoids with compact source f\/ibers (and in particular for groupoids with $\mathcal{G}_1$ compact):
\begin{tw}
Let $\mathcal{G}$ be a transitive topological groupoid with a smooth base~$\mathcal{G}_0$, Tychonoff compact source fibers~$\mathcal{G}_x$ for which the topological groups $\mathcal{G}_x^x$ are locally Euclidean and~$\mathcal{G}_1$ is first countable. Then~$\mathcal{G}$ is continuously isomorphic to a unique Lie groupoid through a base preserving isomorphism.
\end{tw}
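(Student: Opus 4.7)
The plan is to reduce this statement to Theorem~\ref{strong} by verifying the single missing hypothesis, namely that $t_x$ is a quotient map. All the other requirements of Theorem~\ref{strong} (smooth base $\mathcal{G}_0$, Tychonoff source fibers $\mathcal{G}_x$, first countable $\mathcal{G}_1$, locally Euclidean $\mathcal{G}_x^x$) are already among the hypotheses here.

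So the only work is to show that $t_x\colon \mathcal{G}_x\to \mathcal{G}_0$ is a quotient map. First I would observe that $t_x$ is surjective: transitivity of $\mathcal{G}$ means that for every $y\in\mathcal{G}_0$ there is some morphism $x\to y$, which lies in $\mathcal{G}_x$ and maps to $y$ under $t_x$. Next, since $\mathcal{G}_0$ is a smooth manifold it is Hausdorff, and by assumption $\mathcal{G}_x$ is compact. The closed mapping theorem (\cite[Lemma~4.50]{L}, quoted in the preliminaries) then tells us that the continuous map $t_x$ from the compact space $\mathcal{G}_x$ to the Hausdorff space $\mathcal{G}_0$ is closed. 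A continuous surjection that is closed is a quotient map by the first proposition of the preliminaries, so $t_x$ is a quotient map.

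Having checked this, the hypotheses of Theorem~\ref{strong} are met, and we conclude that $\mathcal{G}$ is continuously isomorphic, through a base preserving isomorphism, to a unique Lie groupoid.

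There is essentially no obstacle here; the statement is a direct corollary of Theorem~\ref{strong} once one notices that compactness of the source fibers combined with the Hausdorffness of the smooth base automatically forces $t_x$ to be a closed (hence quotient) map.
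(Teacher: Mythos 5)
Your proposal is correct and follows exactly the paper's own argument: the paper likewise reduces to Theorem~\ref{strong} and applies the closed mapping theorem (compact $\mathcal{G}_x$, Hausdorff $\mathcal{G}_0$) to conclude that $t_x$ is closed, hence a quotient map. You merely spell out the surjectivity and Hausdorffness details that the paper leaves implicit.
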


\begin{proof} By Theorem \ref{strong} it is suf\/f\/icient to prove that $t_x$ is a quotient map. Using the closed mapping theorem we conclude that~$t_x$ is closed and hence a quotient map.
\end{proof}

\begin{tw}\label{comp} Let $\mathcal{G}$ be a transitive topological groupoid with a smooth base~$\mathcal{G}_0$, for which the spaces $\mathcal{G}_x^x$ and $\mathcal{G}_1$ are topological manifolds and~$\mathcal{G}_x$ is a compact topological manifold. Then~$\mathcal{G}$ is continuously isomorphic to a unique Lie groupoid through a base preserving isomorphism.
\end{tw}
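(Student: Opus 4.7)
The plan is to reduce this to the theorem immediately preceding it (the one covering transitive groupoids with compact source fibers and locally Euclidean isotropy groups). Under the hypotheses given here, every assumption of that previous theorem is automatic from the manifold assumptions, so the proof should be almost a one-liner in the same spirit as the proofs of Theorem \ref{SE} and Theorem \ref{prop}, which each just invoke the stronger technical theorem preceding them.

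Concretely, I would argue as follows. First, since $\mathcal{G}_x^x$ is a topological manifold, it is locally Euclidean by definition, so the isotropy-group hypothesis of the previous theorem holds. Second, $\mathcal{G}_1$ is a topological manifold and therefore first countable (topological manifolds, being locally Euclidean and second countable, are metrizable and in particular first countable). Third, $\mathcal{G}_x$ is by hypothesis a compact topological manifold; as a topological manifold it is Hausdorff, locally compact and second countable, hence metrizable, hence Tychonoff, so the Tychonoff and compactness hypotheses on the source fibers of the previous theorem both hold. The smooth base hypothesis on $\mathcal{G}_0$ is shared.

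Having verified all the hypotheses, the previous theorem applies and produces a continuous base-preserving isomorphism from $\mathcal{G}$ to a Lie groupoid, unique up to smooth base-preserving isomorphism, which is exactly what is claimed.

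There is no serious obstacle here; the only thing to check is the small chain of topological implications (topological manifold $\Rightarrow$ metrizable $\Rightarrow$ Tychonoff, and topological manifold $\Rightarrow$ first countable), which are standard. The hard work has already been done in Proposition \ref{eq} and Theorem \ref{strong}, and their specialization to the compact-source-fiber setting in the preceding theorem.
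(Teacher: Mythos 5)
Your proposal is correct and matches the paper's proof, which is literally ``Apply the previous theorem''; you have simply spelled out the routine verifications (topological manifold $\Rightarrow$ first countable, Tychonoff) that the paper leaves implicit.
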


\begin{proof}
Apply the previous theorem.
\end{proof}

\subsection{Some remarks considering assumptions}
We start by showing why it is necessary for $\mathcal{G}_0$ to be smooth. Let $\mathcal{G}_0$ be a~topological manifold which does not admit any smooth structure (e.g., the celebrated $E_8$ 4-manifold). We then take~$\mathcal{G}$ to be the pair groupoid over~$\mathcal{G}_0$. It is apparent that even though in this example~$\mathcal{G}_x^x$, $\mathcal{G}_1$, $\mathcal{G}_x$ and~$\mathcal{G}^x$ are Hausdorf\/f second-countable topological manifolds and the map $t_x$ is a quotient map it is not continuously isomorphic to a Lie groupoid (since this would imply that $\mathcal{G}_0$ is homeomorphic to some smooth manifold). The above groupoid is a counterexample to all the theorems in the previous section if the smoothness assumption on the base is omitted.
\begin{rem} We also wish to note that this assumption is natural in the following sense. One can consider a Lie group $G$ as a Lie groupoid with $\mathcal{G}_0=\{x\}$ and $\mathcal{G}_1=\mathcal{G}^x_x=G$. So if we state the Hilbert's f\/ifth problem in the language of groupoids our assumptions would be $\mathcal{G}_1$ is a~topological manifold and $\mathcal{G}_0$ is a point (and hence a smooth manifold).
\end{rem}

 Assumption that $\mathcal{G}^x_x$ is a topological manifold might not be necessary in Theorems~\ref{SE},~\ref{prop} and~\ref{comp}. However, for the time being it seems hard to get rid of it. We note that this assumption is superf\/luous assuming the validity of Hilbert--Smith conjecture (cf.~\cite{TT}).

In the previous section we explored an approach to Lie groupoids presented in~\cite{McK2}. We wish to address a dif\/ferent approach which is found in~\cite{Moer}:
\begin{defi}[Moerdijk \cite{Moer}] A topological groupoid $\mathcal{G}$ which satisf\/ies the following conditions:
\begin{enumerate}\itemsep=0pt
\item[1)] $\mathcal{G}_0$ is a smooth manifold,
\item[2)] $\mathcal{G}_1$ is a possibly non-Hausdorf\/f and not second countable smooth manifold,
\item[3)] the structure maps are smooth,
\item[4)] the source map is a surjective submersion with Hausdorf\/f f\/ibers,
\end{enumerate}
is called a Lie groupoid.
\end{defi}This def\/inition is somewhat weaker than the one previously considered. Theorems~\ref{strong},~\ref{SE} and~\ref{comp} f\/it nicely with this def\/inition. However, there is a slight problem with Theorem \ref{prop} as it uses the assumption which states that $\mathcal{G}_1$ is Hausdorf\/f which is somewhat restricting when considering the former def\/inition. However, in this context one usually uses a stronger def\/inition of transitivity:
\begin{defi}(Moerdijk~\cite{Moer}) A Lie groupoid is called transitive if $(s,t)\colon \mathcal{G}_1\rightarrow\mathcal{G}_0\times\mathcal{G}_0$ is a~surjective submersion.
\end{defi}
Taking this into consideration we arrive at a version of our main Theorem which is more f\/itting in this case:
\begin{tw}
Let $\mathcal{G}$ be a transitive topological groupoid with a smooth base $\mathcal{G}_0$, Tychonoff source fibers $\mathcal{G}_x$ for which the topological groups $\mathcal{G}_x^x$ are locally Euclidean, the space~$\mathcal{G}_1$ is first countable and the map $(s,t)\colon \mathcal{G}_1\rightarrow\mathcal{G}_0\times\mathcal{G}_0$ is a quotient map. Then $\mathcal{G}$ is continuously isomorphic to a unique Lie groupoid through a base preserving isomorphism.
\end{tw}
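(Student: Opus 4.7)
The plan is to reduce this statement directly to Theorem~\ref{strong} by using the final clause of Proposition~\ref{eq}. All the hypotheses of Theorem~\ref{strong} are already assumed verbatim in the present statement except for the condition that $t_x$ be a quotient map; so the only thing to verify is that $t_x$ inherits this property from the assumption that $(s,t)\colon \mathcal{G}_1\rightarrow\mathcal{G}_0\times\mathcal{G}_0$ is a quotient map.

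This verification is precisely the content of the last sentence of Proposition~\ref{eq}: from $(s,t)$ being a quotient map one restricts to the source fiber $\mathcal{G}_x$, which is a closed saturated subset of $\mathcal{G}_1$ (saturated because $(s,t)^{-1}(\{x\}\times\mathcal{G}_0)=\mathcal{G}_x$), so $(s,t)|_{\mathcal{G}_x}=(x,t_x)\colon \mathcal{G}_x\rightarrow\{x\}\times\mathcal{G}_0$ is again a quotient map, and hence $t_x$ is a quotient map.

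With $t_x$ now known to be a quotient map, every hypothesis of Theorem~\ref{strong} is in place: the base $\mathcal{G}_0$ is smooth, the source fibers $\mathcal{G}_x$ are Tychonoff, the isotropy groups $\mathcal{G}_x^x$ are locally Euclidean, $\mathcal{G}_1$ is first countable, and $t_x$ is a quotient map. Theorem~\ref{strong} then delivers the desired continuous base preserving isomorphism to a unique Lie groupoid, concluding the proof.

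There is essentially no obstacle here; the statement is an immediate corollary of the previously established Proposition~\ref{eq} and Theorem~\ref{strong}, and the only conceptual content is the observation that the submersion-style hypothesis on $(s,t)$ favored by Moerdijk's framework is in fact stronger than the fiberwise quotient hypothesis used in Mackenzie's framework, so the main result carries over without modification.
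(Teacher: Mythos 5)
Your proposal is correct and follows exactly the paper's own route: the paper's proof is the one-line "This follows from Proposition \ref{eq} and Theorem \ref{strong}," and you have simply spelled out the same reduction, using the final clause of Proposition \ref{eq} to obtain that $t_x$ is a quotient map and then invoking Theorem \ref{strong}. No gaps.
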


\begin{proof}
This follows from Proposition \ref{eq} and Theorem \ref{strong}.
\end{proof}

\begin{tw} Let $\mathcal{G}$ be a transitive topological groupoid with a smooth base $\mathcal{G}_0$, for which the group $\mathcal{G}_x^x$ is a topological manifold, $\mathcal{G}_x$ is a $($possibly not second countable$)$ topological manifold, $\mathcal{G}_1$~is a $($possibly not second countable and non-Hausdorff$)$ topological manifold and the map $(s,t)\colon \mathcal{G}_1\rightarrow\mathcal{G}_0\times\mathcal{G}_0$ is a quotient map. Then~$\mathcal{G}$ is continuously isomorphic to a unique Lie groupoid through a base preserving isomorphism.
\end{tw}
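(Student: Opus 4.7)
The plan is to reduce the statement directly to Theorem \ref{strong}, whose hypotheses I would verify one by one under the weakened manifold assumptions granted here. Since $\mathcal{G}_0$ is smooth and $\mathcal{G}_x^x$ is a topological manifold (hence locally Euclidean), two of the five hypotheses of Theorem \ref{strong} are immediate, and transitivity is assumed.

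Next I would dispatch the two point-set-topological conditions, paying close attention to the partial relaxations in the paper's standing convention on manifolds. For first countability of $\mathcal{G}_1$: although Hausdorffness and second countability are explicitly relaxed, the space is still required to be locally Euclidean, and first countability is a local property inherited from every $\mathbb{R}^n$. For the Tychonoff condition on $\mathcal{G}_x$: the only relaxation here is second countability, so $\mathcal{G}_x$ remains a Hausdorff locally Euclidean space, hence locally compact Hausdorff, and every locally compact Hausdorff space is completely regular and thus Tychonoff.

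The last hypothesis — that $t_x$ is a quotient map — is furnished by the final clause of Proposition \ref{eq}: the assumption that $(s,t)$ is a quotient map, combined with the first countability of $\mathcal{G}_1$ just established, places us squarely in the setting of that proposition and forces $t_x$ to be a quotient map. With all hypotheses of Theorem \ref{strong} confirmed, applying it produces the desired continuous isomorphism to a unique Lie groupoid through a base-preserving isomorphism.

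There is no serious obstacle here; the content of the theorem is simply that Moerdijk's looser manifold-style packaging of the hypotheses still fits cleanly within the framework of Theorem \ref{strong}. The only care needed is, in interpreting \emph{topological manifold} against the paper's standing conventions, to confirm that the two local properties above (first countability of $\mathcal{G}_1$ and Tychonoff-ness of $\mathcal{G}_x$) survive the relaxations of Hausdorffness and second countability that Moerdijk's definition permits.
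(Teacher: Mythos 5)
Your proof is correct and follows essentially the same route as the paper: the paper's proof is simply ``apply the previous theorem,'' whose own proof is Proposition~\ref{eq} plus Theorem~\ref{strong}, and you have merely inlined that reduction while carefully verifying that local Euclideanness still yields first countability of $\mathcal{G}_1$ and Tychonoff-ness of $\mathcal{G}_x$ under the relaxed manifold conventions. Those verifications are exactly the content the paper leaves implicit, and they are all sound.
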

\begin{proof} Apply the previous theorem.
\end{proof}

\begin{rem} Note that in the above theorems the resulting groupoids are Lie with respect to both def\/initions.
\end{rem}

\pdfbookmark[1]{References}{ref}
\LastPageEnding

\end{document}